\theoremstyle{plain}
\newtheorem{quest}{Question}
\newtheorem{question}[quest]{Question}
\newtheorem{prob}[quest]{Problem}
\newtheorem{defin}[quest]{Definition}
\newtheorem{theorem}[quest]{Theorem}
\newtheorem{prop}[quest]{Proposition}
\newtheorem{corollary}[quest]{Corollary}
\newtheorem{lemma}[quest]{Lemma}
\newtheorem{fact}[quest]{Fact}
\newtheorem{obs}[quest]{Observation}
\newtheorem*{prop*}{Proposition}
\newtheorem*{defin*}{Definition}
\theoremstyle{remark}
\newtheorem{example}[quest]{Example}
\newcommand{\R}{\mathbb{R}}
\newcommand{\N}{\mathbb{N}}
\newcommand{\C}{\mathbb{C}}
\newcommand{\T}{\mathbb{T}}
\newcommand{\adef}{\begin{defin}}
\newcommand{\zdef}{\end{defin}}
\newcommand{\F}{\mathcal F}
\newcommand{\TT}{\mathcal T}
\def\lspan{\operatorname{span}}
\def\Lip{\operatorname {Lip}}
\def\id{\operatorname {id}}
\newcommand{\aproof}{\begin{proof}}
\newcommand{\zproof}{\end{proof}}
\title{Equivariant liftings in Lipschitz-free spaces}
\thanks{V. Ferenczi's research has been supported by S\~ao Paulo Research Foundation (Fapesp), grant 2023/12916-1 and
by National Council for Scientific and Technological Development (CNPq),
grant 304194/2023-9. P. Kaufmann's research has been supported by S\~ao Paulo Research Foundation (Fapesp), grant 2023/12916-1. E. Perneck\'a's research was supported by the Czech Science Foundation (GA\v CR) grant 22-32829S}
\subjclass[2010]{Primary 46B20.   Secondary 46B04, 46B28.}
\keywords{Lipschitz maps, Lipschitz-free spaces, exact sequences of Banach spaces}
\author{Valentin Ferenczi, Pedro Kaufmann, and Eva Perneck\'a}
\address{Valentin Ferenczi, Departamento de Matem\'atica, Instituto de Matem\'atica e
Estat\'\i stica, Universidade de S\~ao Paulo, rua do Mat\~ao 1010,
05508-090 S\~ao Paulo SP, Brazil  \\ and \newline
Equipe d'Analyse Fonctionnelle \\
Institut de Math\'ematiques de Jussieu \\
Universit\'e Pierre et Marie Curie - Paris 6 \\
Case 247, 4 place Jussieu \\
75252 Paris Cedex 05 \\
France.}
\email{ferenczi@ime.usp.br}
\address{Pedro L. Kaufmann, Departamento de Ci\^encia e Tecnologia, Instituto de Ci\^encia e Tecnologia, Universidade Federal de S\~ao Paulo, Rua Monsueto Cesare Giulio Lattes, 1301, 12247-014, S\~ao Jos\'e dos Campos, SP, Brazil.}
\email{plkaufmann@unifesp.br}
\address{Eva Perneck\'a, Faculty of Information Technology, Czech Technical University in Prague, Th\'akurova 9, 160 00, Prague 6, Czech Republic}
\email{perneeva@fit.cvut.cz}
\begin{document}
\begin{abstract}
We consider Banach spaces $X$ that can be linearly lifted into their Lipschitz-free spaces $\mathcal{F}(X)$ and, for a group $G$ acting on $X$ by linear isometries, we study the possible existence of $G$-equivariant linear liftings. In particular, we prove that such lifting exists when $G$ is compact in the strong operator topology, or an increasing union of such groups and $\mathcal{F}(X)$ is complemented in its bidual by an equivariant projection. 
 As an example of application, we define and study a complex version of the Lipschitz-free space $\mathcal{F}(X)$ when $X$ is a subset of a complex Banach space stable under the action of the circle group.
\end{abstract}

\maketitle

\section{Introduction}

We begin by recalling the definitions of the spaces of Lipschitz functions and their natural preduals, the Lipschitz-free spaces. Given a metric space $(M,d)$ with a fixed base point $0\in M$, the vector space $\Lip_0(M)$ of all Lipschitz functions $f\colon M\to \R$ satisfying $f(0)=0$, endowed with the norm given by the least Lipschitz constant of a function 
$$\|f\|=\Lip(f)=\sup\left\{\frac{f(x)-f(y)}{d(x,y)},\,x, y\in M, x\neq y\right\},$$ is a Banach space. Moreover, it is isometric to a dual Banach space. Indeed, consider the isometric embedding $\delta\colon M\to {\rm Lip}_0(M)^*$ defined by $\langle f,\delta(x)\rangle=f(x)$ for all $x\in M$ and $f\in \Lip_0(M)$. The norm-closed subspace of $\Lip_0(M)^*$ generated by $\delta(M)$,
$$\F(M)=\overline{\lspan\,\delta(M)}^{\|\cdot\|}\,\subset\,\Lip_0(M)^*,$$
is called the {\emph{Lipschitz-free space}} over $M$. It can be characterized by its universal property as the unique, up to a linear isometry, Banach space that contains an isometric copy of $M$ and such that any Lipschitz map $L\colon M\to X$ into a Banach space $X$ with $L(0)=0$ can be uniquely extended to a bounded linear operator $\bar{L}\colon \F(M)\to X$, meaning that $\bar{L}  \delta=L$ (see \cite[Theorem 3.6]{weaver}). Since $\|\bar{L}\|=\Lip(L)$, by extending real-valued Lipschitz functions on $M$ we obtain that $\F(M)^*$ is linearly isometric to $\Lip_0(M)$. The universal property also provides any Lipschitz map $L\colon M\to N$ between two metric spaces $M, N$ that preserves the base point with a linearization to a unique bounded linear operator $\widetilde{L}\colon \F(M)\to\F(N)$ between the corresponding Lipschitz-free spaces such that $\|\widetilde{L}||=\Lip(L)$; simply let $\widetilde{L}=\overline{\delta  L}$. Moreover, Lipschitz homeomorphisms (resp. isometries) between metric spaces induce linear isomorphisms (resp. isometries) between the Lipschitz-free spaces. When the metric space is a Banach space $X$, there is additional structure relating $X$  and $\F(X)$. Namely, we can define the barycenter operator
$$\beta_X:  \F(X) \rightarrow X$$
as the linear extension of the identity map on $X$, i.e. $\beta_X=\overline{\id_X}$ (see \cite{godefroy2003lipschitz} after Lemma 2.3). In some cases (see next section), this map admits a linear right inverse, providing a linear copy of $X$ inside $\F(X)$. When the setting is clear, we will sometimes omit the subscript $X$ from the notation of $\beta$. For more on the theory of spaces $\Lip_0(M)$ and $\F(M)$, we refer to the book of Weaver \cite{weaver} (where the latter are called {\emph{Arens-Eells spaces}  and denoted $\AE(M)$) or to Godefroy and Kalton's paper \cite{godefroy2003lipschitz}.

The main aim of this paper, given a group $G$ acting by linear isometries (or simply acting boundedly by automorphisms) on a Banach space $X$,
is to investigate the possible $G$-equivariance of a bounded linear right inverse of the map $\beta_X: \F(X) \rightarrow X$, when such an inverse exists (i.e. when $X$ has the lifting property defined in \cite{godefroy2003lipschitz}).

The question may initially be described in the more general setting of pointed metric spaces. If $M$ is a metric space with a base point $0$ and $G \times M \rightarrow M$ is an action of a group $G$ on $M$ by Lipschitz homeomorphisms (resp. isometries) fixing $0$, the map $\delta: M\to \F(M)$ is what we call {\em $G$-equivariant}, i.e. $\tilde{g}\delta=\delta g$ holds for every $g\in G$ and the induced isomorphism (resp. isometry) $\tilde{g}$. This means informally that $\F(M)$ inherits the metric "symmetries" of $M$.   More explicitely, if in the universal property characterizing $\F(M)$, the Lipschitz map $L: M \to X$ is $G$-equivariant, then the arrows $\delta, \overline{L}$  in the  diagram corresponding to the relation $\bar{L}  \delta=L$        are $G$-equivariant as well. For $\overline{L}$ this means that $\overline{L}\tilde{g}=g\overline{L}$ and is verified easily in every $\delta(x)$ and then in an arbitrary element of $\F(X)$ by linearity and continuity. Therefore, it is possible to check the following: {\em $\F(M)$ is also a universal object in the category where objects are required to be equipped with a bounded action of $G$ and arrows are required to be  $G$-equivariant}.

In the case when the metric space is a Banach space $X$, the map $\beta_X$ is also $G$-equivariant in the sense that $\beta_X \tilde{g}=g\beta_X$ for all $g \in G$. However, linear right inverses of $\beta_X$ are not, in general, $G$-equivariant. Our aim is therefore to study whether the symmetries of $X$ appearing in some well-chosen linearly isomorphic copy of $X$ inside $\F(X)$, when such a copy exists,  can be "compatible" with those same symmetries induced on $\F(X)$ as described above. In other words, we study the compatibility between the linear theory of the Lipschitz-free space $\F(X)$ and the action of a group on the Banach space $X$.

For recent work on group actions and Lipschitz-free spaces or spaces of Lipschitz functions on metric spaces, see \cite{cuthdoucha, cuthdouchatitkos, raunig, rosendal}.

\section{The $G$-equivariant lifting property}

Following the notation of \cite{CF}, we call {\em $G$-space} a Banach space $X$ together with an action $(g,x) \mapsto gx$ of a group $G$ by automorphisms on $X$. A map $T$ between $G$-spaces $X$ and $Y$ is $G$-equivariant if $T(gx)=gTx$ for all $x \in X$.

The following definition and theorem are from \cite{godefroy2003lipschitz}.

\begin{defin}\label{lifting} A Banach space $X$ has the (isometric)  lifting property if there exists a (norm $1$) linear bounded right inverse to $\beta_X$, i.e. a map
$T: X \rightarrow \F(X)$ such that 
$\beta_X T=Id_X$. 
\end{defin}

\begin{theorem}[Godefroy-Kalton 2003] If $X$ is separable, then $X$ has the isometric lifting property.
\end{theorem}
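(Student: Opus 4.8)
The plan is to build the lifting $T$ by a translation-averaging construction and then to use separability to keep its values inside $\F(X)$ rather than in its bidual. First observe that the norm is forced from below: since $\|\beta_X\|=\Lip(\id_X)=1$, any linear right inverse $T$ satisfies $\|x\|=\|\beta_X Tx\|\le\|Tx\|$, so $\|T\|\ge 1$ automatically. Hence it suffices to produce a \emph{linear} map $T\colon X\to\F(X)$ with $\beta_X T=\id_X$ and $\|T\|\le 1$; such a $T$ is then automatically an isometric embedding.

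The building blocks are the translated molecules. For $u,v\in X$ one has $\|\delta(u)-\delta(v)\|_{\F(X)}=\|u-v\|$ and $\beta_X(\delta(u)-\delta(v))=u-v$. Thus, for any finite set $F\subseteq X$, the map
$$T_F(x)=\frac{1}{|F|}\sum_{y\in F}\bigl(\delta(y+x)-\delta(y)\bigr)\in\F(X)$$
satisfies $\beta_X T_F(x)=x$ and $\|T_F(x)\|\le\|x\|$ for every $x$. It is not linear, but its additivity defect is controlled by the translation-invariance of $F$: a direct telescoping gives $\|T_F(x_1+x_2)-T_F(x_1)-T_F(x_2)\|\le\frac{|F\,\triangle\,(F+x_2)|}{|F|}\,\|x_1\|$, and a similar Følner estimate controls the homogeneity defect. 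Since the additive group $(X,+)$ is abelian, hence amenable, the sets $F$ can be chosen almost invariant under any prescribed finite family of translations, making $T_F$ as close to linear as we wish while retaining $\beta_X T_F=\id$ and $\|T_F\|\le 1$.

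To turn these approximately linear, $\F(X)$-valued maps into an honest linear lifting I would invoke separability to reduce to finite dimensions. Write $X=\overline{\bigcup_n E_n}$ for an increasing sequence of finite-dimensional subspaces with dense union. On a fixed $E_n$, a sufficiently invariant $T_F$ restricts to a map that is uniformly close on the unit ball to an additive, hence (by continuity) linear, map; linearizing and then correcting the resulting small barycenter error by a routine finite-dimensional adjustment yields an exactly linear $S_n\colon E_n\to\F(X)$ with $\beta_X S_n=\id_{E_n}$ and $\|S_n\|\le 1+\varepsilon_n$. Performing this \emph{coherently} — so that $S_{n+1}$ differs from $S_n$ on $E_n$ by at most $\eta_n$ with $\sum_n\eta_n<\infty$ and $\varepsilon_n\to 0$ — makes $(S_n(x))_n$ norm-Cauchy in $\F(X)$ for every $x\in\bigcup_n E_n$; the limit defines a bounded linear map on that dense union with $\beta_X T=\id$ and $\|T\|\le 1$, which extends by continuity to all of $X$.

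The main obstacle is the coherent finite-dimensional step: one must simultaneously linearize the approximately linear averages with enough quantitative control that the operator norm stays below $1+\varepsilon_n$, and extend the lifting from $E_n$ to $E_{n+1}$ while perturbing it only slightly on $E_n$, so that the sequence converges in norm and the limit genuinely lands in $\F(X)$. This last point is precisely where separability is essential. Indeed, the tempting shortcut of averaging $T_F$ against a translation-invariant mean on $(X,+)$ yields an \emph{exactly} linear isometric lifting at once, but only with values in the bidual $\F(X)^{**}=\Lip_0(X)^*$, and there is in general no way to push such a bidual-valued section back into $\F(X)$. The separable, sequential construction with summable corrections is what keeps the limit inside $\F(X)$ itself.
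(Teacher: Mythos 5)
There is a genuine gap, and it sits exactly where you place your adverb ``coherently.'' Your preliminary estimates are fine: $\beta_X T_F=\id$, $\|T_F(x)\|\le\|x\|$, and the identity $T_F(x_1+x_2)-T_F(x_1)-T_F(x_2)=T_{F+x_2}(x_1)-T_F(x_1)$ do give the F\o lner bound you state. But a F\o lner set $F$ is almost invariant only under a prescribed \emph{finite} set of translations, so each $T_F$ is approximately additive only relative to finitely many increments (patchable to a bounded set of $E_n$ via the $1$-Lipschitz property of $T_F$ and a finite net, but still only locally). The Hyers--Ulam linearization you implicitly invoke needs the additivity defect controlled at arguments $2^kx$ for all $k$, which your local estimate does not supply without further work. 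More importantly, even granting a linear $S_n$ on each $E_n$ with $\beta_X S_n=\id_{E_n}$ and $\|S_n\|\le 1+\varepsilon_n$ (which does exist, e.g.\ via the isometric embedding $\F(E_n)\hookrightarrow\F(X)$), nothing in your construction forces $S_{n+1}|_{E_n}$ to be within a summable $\eta_n$ of $S_n$: the F\o lner sets and the linearizations are non-canonical, and two different choices of $F$ can produce liftings of $E_n$ that are far apart in $\F(X)$. No mechanism for this compatibility is proposed, and it is precisely the point of the theorem --- as you yourself observe, the soft averaging over the amenable group $(X,+)$ produces only a $\Lip_0(X)^*$-valued section, and pushing it down into $\F(X)$ is the entire difficulty. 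Asserting that a sequential construction ``with summable corrections'' exists is restating the conclusion, not proving it.

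For comparison: the paper does not reprove this statement (it cites Godefroy--Kalton), but it reproduces the essential argument in its later proposition on spaces with a $1$-unconditional basis, explicitly described as an adaptation of \cite[Theorem 3.1]{godefroy2003lipschitz}. That proof avoids your coherence problem entirely by building one global object: separability yields a compact ``cube'' $K=\{\sum_n t_n\alpha_n e_n\}$ inside $X$, one defines $\phi_n$ as a Bochner integral over $K$ of differences of Dirac measures (which lies in $\F(X)$ automatically, with no bidual detour), sets $R(e_n)=\phi_n$ on the dense span, and proves $\|R(x)\|\le\|x\|$ by testing against G\^ateaux differentiable Lipschitz functions via the fundamental theorem of calculus, Fubini, and the uniform density of such functions in $B_{\Lip_0(X)}$ (\cite[Corollary 6.43]{benyamini1998geometric}). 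If you want to salvage your approach, you would need to replace the ad hoc F\o lner averages by a single integral of this kind; as written, the proof is incomplete.
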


We shall be interested in $G$-equivariant maps $T$
from $X$ to ${\mathcal F}(X)$, i.e. such that $$T g=\tilde{g} T$$ for all $g \in G$. The natural extension of the lifting property to the $G$-space setting is given by the next definition.

\begin{defin}
We say that a $G$-space $X$ has the \emph{$G$-equivariant (isometric)  lifting property} if there exists a (norm $1$) $G$-equivariant linear bounded right inverse for $\beta_X$, i.e. a $G$-equivariant map $T: X \rightarrow \F(X)$ such that 
$\beta_X T=Id_X$.
\end{defin}

\begin{obs} If $X$ has the $G$-equivariant lifting property via $T$, then the projection $P$ from $\F(X)$ onto $TX$ defined as
$$P=T\beta_X$$ commutes with $\tilde{g}$ for all $g \in G$.
In particular, $Im P$ is (globally) invariant under $\tilde{G}:=\{\tilde{g}, g \in G\}$. \end{obs}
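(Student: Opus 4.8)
The plan is to verify directly that $P=T\beta_X$ is a projection onto $TX$ and then to exploit the two equivariance relations already recorded in the excerpt. First I would confirm idempotence: using $\beta_X T=\id_X$, we compute $P^2=T\beta_X T\beta_X=T(\beta_X T)\beta_X=T\beta_X=P$, so $P$ is a bounded linear projection. Its range is $TX$: clearly $\operatorname{Im}P\subseteq TX$, while conversely for any $x\in X$ we have $Tx=T(\beta_X Tx)=P(Tx)\in\operatorname{Im}P$, giving $TX\subseteq\operatorname{Im}P$. Hence $\operatorname{Im}P=TX$.

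The heart of the argument is the commutation relation $P\tilde g=\tilde gP$. Here I would chain together the defining equivariance of $T$, namely $Tg=\tilde gT$, and the equivariance of the barycenter operator observed in the Introduction, namely $\beta_X\tilde g=g\beta_X$. Then for each $g\in G$,
$$P\tilde g=T\beta_X\tilde g=T(g\beta_X)=(Tg)\beta_X=(\tilde gT)\beta_X=\tilde g(T\beta_X)=\tilde gP.$$

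For the final assertion, recall that since $g$ acts on $X$ by an automorphism, the induced map $\tilde g$ is a linear isomorphism of $\F(X)$, in particular surjective. Combining this with the commutation just established yields
$$\tilde g(\operatorname{Im}P)=\tilde gP(\F(X))=P\tilde g(\F(X))=P(\F(X))=\operatorname{Im}P,$$
where the third equality uses surjectivity of $\tilde g$. Thus $\operatorname{Im}P=TX$ is globally invariant under each $\tilde g$, and therefore under all of $\tilde G$.

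I do not anticipate a genuine obstacle here: the statement is a formal consequence of the two equivariance identities together with the right-inverse relation $\beta_X T=\id_X$. The only point requiring a little care is to apply these identities in the correct order, namely the barycenter relation first and the lifting relation second, so that the group element $g$ acting on $X$ is converted into $\tilde g$ acting on $\F(X)$ without a mismatch.
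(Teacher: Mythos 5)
Your verification is correct and is exactly the direct computation the paper intends; the paper states this as an observation without proof, and your chaining of $\beta_X\tilde g=g\beta_X$ with $Tg=\tilde gT$, together with the idempotence check from $\beta_XT=\id_X$ and the surjectivity of $\tilde g$, supplies precisely the omitted argument. No gaps.
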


It is clear that not every linear right inverse for $\beta$ is $G$-equivariant in general, but that in many cases one can be chosen to have this property. 
To see this, consider the example of $X=\R$ and $G=\{-Id,Id\}$, with the lifting map 
$T(\lambda)=\lambda \delta(1)$.
While this map is a linear inverse of $\beta_\R$, it is clear that it is not $G$-equivariant: indeed, $T(-1)=-\delta(1)$ while $\widetilde{(-Id)} T(1)=\widetilde{(-Id)}\delta(1)=\delta({-1})$. It is also the case that $\widetilde{(-Id)} T(1)$ does not even belong to $T(\R)=\R \delta(1)$.
However a $G$-equivariant linear inverse to $\beta_\R$ may be easily defined: take $U(\lambda)=\frac{\lambda}{2}(\delta(1) -\delta({-1}))$.

The main question of this paper is therefore as follows:

\begin{quest} If $X$ admits a bounded (resp. isometric) linear action of a group $G$ and has the (resp. isometric) lifting property, then which conditions on $X$ and/or $G$ guarantee that the $G$-equivariant (resp. isometric) lifting property holds?
\end{quest}

We can also ask whether it is possible to achieve the weaker condition that $\tilde{g}Tx$ belongs to $TX$ for all $g \in G$, $x \in X$.

\subsection{Homological interpretation and exact sequences of $G$-spaces}

We refer to the book by Cabello and Castillo
\cite{CC} for homological methods in Banach spaces, whose language is widely used in \cite{godefroy2003lipschitz}.
If $X$ is a Banach space and $\F(X)$ its Lipschitz-free space, the definition of the map $\beta_X$ corresponds to the exact sequence:

$$0 \rightarrow {\rm Ker} \beta_X \rightarrow {\mathcal F}(X) \rightarrow X \rightarrow 0.$$
In this context, the space $X$ admitting the lifting property of Godefroy-Kalton (Definition \ref{lifting}) means that the quotient map  $\beta_X: {\mathcal F}(X) \rightarrow X$ in this exact sequence admits a bounded linear inverse $T$, or equivalently that ${\rm Ker} \beta$ is complemented, for which in homological language we say that the exact sequence "splits".

\

Let $G$ be a group. In \cite{CF}, the related category of $G$-spaces, with $G$-equivariant arrows, and their associated short exact sequences, were studied, see \cite{CF} Section 10. The  comments in our introduction show that
$$0 \rightarrow {\rm Ker} \beta_{X}\rightarrow {\mathcal F}(X) \rightarrow X \rightarrow 0$$
is an exact sequence of $G$-spaces
(because the action of $G$ on ${\mathcal F}(X)$ leaves ${\rm Ker} \beta_{X}$ invariant). The question of whether there is a $G$-equivariant linear bounded right inverse of $\beta_{X}$ corresponds to the general notion of $G$-splitting of an exact sequence of $G$-spaces, \cite{CF} Definition 10.2 and Proposition 10.6, which may be formulated as follows:

\begin{defin}[Castillo-Ferenczi] Consider an exact sequence
$$0 \rightarrow Y \rightarrow Z \rightarrow X \rightarrow 0$$
where the map $i: Y \rightarrow Z$ is the inclusion map of the subspace $Y$ inside $Z$. Assume a group $G$ acts boundedly on $Z$ and leaves $Y$ invariant, and therefore that the quotient map $\pi: Z \rightarrow X$ is $G$-equivariant. Then the following assertions are equivalent:
\begin{itemize}
\item[(i)] The quotient map $\pi$ admits a $G$-equivariant linear bounded inverse $T$.
\item[(ii)] $Y$ admits a $G$-invariant complement $W$.
\item[(iii)] $Y$ is complemented in $Z$ by a $G$-equivariant projection $p$.
\end{itemize}
When (i)-(ii)-(iii) hold we say that the exact sequence $G$-splits, or splits as a sequence of $G$-spaces.
\end{defin}

It may be interesting to recall the relation between (i),(ii), and (iii): (i) $\Rightarrow$ (ii) Pick $W=TX$. (ii) $\Rightarrow$ (iii) Let
$p$ be the projection onto $Y$ associated to the decomposition $Z=Y \oplus W$.
$(iii) \Rightarrow (i)$ Let $Tx=(Id-p)\tilde{x}$ where $\tilde{x}$ is any element of $Z$ such that $\pi(\tilde{x})=x$.

In conclusion:

\begin{fact}
A $G$-space $X$ has the $G$-equivariant lifting property if and only if the exact sequence of $G$-spaces
$$0 \rightarrow {\rm Ker} \beta_{X}\rightarrow {\mathcal F}(X) \rightarrow X \rightarrow 0$$
 $G$-splits.
\end{fact}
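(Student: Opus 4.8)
The plan is to derive the statement as a direct application of the Castillo--Ferenczi equivalence (the Definition immediately preceding the Fact) to the specific exact sequence determined by the barycenter map. First I would set $Y={\rm Ker}\,\beta_X$, $Z=\mathcal{F}(X)$, and take the quotient space to be $X$ itself with quotient map $\pi=\beta_X$. The inclusion $i\colon {\rm Ker}\,\beta_X\hookrightarrow\mathcal{F}(X)$ is genuinely the inclusion of a closed subspace, so the formal shape required by that Definition is in place, provided its three structural hypotheses are verified.

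The key verifications are the following. The group $G$ acts on $\mathcal{F}(X)$ through the linearizations $\tilde{g}$, and since each $g$ is a bounded automorphism of $X$ with $\Lip(g)=\|g\|$, the induced operator $\tilde{g}$ is bounded on $\mathcal{F}(X)$; hence the action on $Z$ is bounded in the required sense. That this action leaves $Y={\rm Ker}\,\beta_X$ invariant follows from the $G$-equivariance of $\beta_X$ recalled in the introduction: if $\beta_X(u)=0$ then $\beta_X(\tilde{g}u)=g\beta_X(u)=0$, so $\tilde{g}({\rm Ker}\,\beta_X)\subseteq{\rm Ker}\,\beta_X$ for every $g\in G$. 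Finally, the same relation $\beta_X\tilde{g}=g\beta_X$ is exactly the statement that the quotient map $\pi=\beta_X$ is $G$-equivariant. Thus all hypotheses of the Definition hold in this setting.

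With the setup in place, the conclusion is immediate: condition (i) of the Castillo--Ferenczi Definition asserts precisely that $\beta_X$ admits a $G$-equivariant linear bounded right inverse $T$, which is verbatim the definition of the $G$-equivariant lifting property of $X$. Since by that Definition condition (i) holds if and only if the sequence $G$-splits, the two-way implication claimed in the Fact follows at once.

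I would expect the only genuine subtlety to lie in confirming that the phrase ``acts boundedly'' in the Castillo--Ferenczi framework is automatically satisfied here---that is, that linearizing a bounded (not necessarily isometric) action on $X$ produces a bounded action on $\mathcal{F}(X)$---and in keeping straight the role-reversal between subspace and quotient, since in our sequence the distinguished subspace is the kernel ${\rm Ker}\,\beta_X$ while the quotient is the original space $X$. Beyond this bookkeeping no real obstacle arises, as the Fact is essentially a translation of the previously stated equivalence into the language of the lifting property.
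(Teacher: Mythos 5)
Your proposal is correct and follows essentially the same route as the paper: the authors likewise observe that the barycenter sequence is an exact sequence of $G$-spaces (noting precisely that the action on $\mathcal{F}(X)$ leaves ${\rm Ker}\,\beta_X$ invariant) and then read the Fact off condition (i) of the Castillo--Ferenczi equivalence. Your additional verifications of boundedness of the induced action and equivariance of $\beta_X$ are exactly the bookkeeping the paper relegates to its introductory remarks.
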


\subsection{Duality}

The  notion of dual sequence
$$0 \rightarrow X^* \rightarrow Z^* \rightarrow Y^* \rightarrow 0$$
of an exact sequence
$$0 \rightarrow Y \rightarrow Z \rightarrow X \rightarrow 0$$
 is classical; the arrows of the dual sequence are simply the adjoints of the arrows of the original sequence, and it is an exercise to check that this defines an exact sequence.

In the context of Lipschitz-free spaces, the dual sequence to the exact sequence
$0 \rightarrow {\rm Ker} \beta_X \rightarrow \F(X) \rightarrow X \rightarrow 0$ is
$$0 \rightarrow X^* \rightarrow {\rm Lip}_0(X) \rightarrow {\rm Lip}_0(X)/X^* \rightarrow 0.$$

It is well-known that for any Banach space $X$, there exists a norm one projection from ${\rm Lip}_0(X)$ onto $X^*$, see e.g. \cite{benyamini1998geometric}
Proposition 7.5. In other words, 
\begin{fact} The sequence $0 \rightarrow X^* \rightarrow {\rm Lip}_0(X) \rightarrow {\rm Lip}_0(X)/X^* \rightarrow 0$ always splits in the category of Banach spaces. \end{fact}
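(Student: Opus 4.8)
The plan is to read the statement as the trivial-group instance of the equivalence recalled right after the Castillo--Ferenczi definition: for a short exact sequence $0 \to Y \to Z \to W \to 0$ whose first arrow is the inclusion of a subspace, splitting in the category of Banach spaces is the same as $Y$ being complemented in $Z$, i.e.\ as the existence of a bounded linear projection $Z \to Y$ (this is $(iii)\Leftrightarrow(i)$ with $G$ trivial). Taking $Y = X^*$ and $Z = \Lip_0(X)$, the Fact is then immediate from the norm-one projection $P\colon \Lip_0(X) \to X^*$ recalled just above (\cite{benyamini1998geometric}, Proposition 7.5). So the only thing to supply is that projection, and I would recall its construction to keep the argument self-contained.

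First I would record that $X^*$ sits isometrically inside $\Lip_0(X)$, since every $\phi \in X^*$ is Lipschitz, vanishes at $0$, and satisfies $\Lip(\phi) = \|\phi\|$. To build the projection I would exploit that the additive group $(X,+)$ is abelian, hence amenable, and so admits a translation-invariant mean: a positive linear functional $m$ on $\ell^\infty(X)$ with $m(\mathbf 1)=1$ and $m\big(h(\cdot + x_0)\big)=m(h)$ for every $x_0\in X$. For $f\in\Lip_0(X)$ and $x\in X$, the function $y\mapsto f(y+x)-f(y)$ belongs to $\ell^\infty(X)$, with $|f(y+x)-f(y)|\le \Lip(f)\,\|x\|$, so I may set
\[
(Pf)(x)=m_y\big[\,f(y+x)-f(y)\,\big].
\]

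Next I would verify the required properties. Linearity of $P$ in $f$ is inherited from that of $m$. Additivity of $Pf$ in $x$ follows by decomposing $f(y+x_1+x_2)-f(y)=\big(f(y+x_1+x_2)-f(y+x_2)\big)+\big(f(y+x_2)-f(y)\big)$ and applying the translation invariance of $m$ to the first summand, while the bound above gives $|(Pf)(x)-(Pf)(x')|\le \Lip(f)\,\|x-x'\|$. An additive Lipschitz map is automatically $\R$-linear, so $Pf\in X^*$ with $\|Pf\|\le\Lip(f)$, whence $\|P\|\le 1$. Finally, if $f\in X^*$ then $f(y+x)-f(y)=f(x)$ is constant in $y$, so $(Pf)(x)=f(x)$; thus $P$ restricts to the identity on $X^*$, is in particular idempotent with range exactly $X^*$, and has $\|P\|=1$.

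The only genuinely non-formal ingredient is the existence of the invariant mean, which is precisely where amenability of the abelian group $(X,+)$ enters; everything else is bookkeeping. I would emphasize that $\R$-homogeneity of $Pf$ needs no separate multiplicative averaging, being forced by additivity together with the Lipschitz bound, and that the projection $P$ is in general neither canonical nor weak*-continuous, which is exactly why the conclusion is only a splitting in the category of Banach spaces.
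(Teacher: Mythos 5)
Your proof is correct and is essentially the paper's own argument: the paper simply cites the existence of a norm-one projection from $\Lip_0(X)$ onto $X^*$ (Benyamini--Lindenstrauss, Proposition 7.5), and what you have written out is precisely the standard invariant-mean construction behind that citation, together with the routine reduction of ``splits'' to ``$X^*$ is complemented in $\Lip_0(X)$''. All the verifications (additivity via translation invariance, the Lipschitz bound forcing $\R$-homogeneity, $P|_{X^*}=\mathrm{Id}$) are sound.
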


This can also be seen as a particular case of \cite{godefroy2003lipschitz} Proposition 2.6.

Passing to the category of $G$-spaces, an exact sequence of $G$-spaces has dual exact sequence which is also of $G$-spaces; see for example \cite{garcia} Exemplo 3.1.9.
Namely, if 
$$0 \rightarrow Y \rightarrow X \rightarrow Z \rightarrow 0$$
is an exact sequence of $G$-spaces, with respective actions $g \mapsto u(g), \lambda(g), v(g)$ on $Y, X, Z$, then the dual sequence
$$0 \rightarrow Z^* \rightarrow X^* \rightarrow Y^* \rightarrow 0$$
is an exact sequence of $G$-spaces under the respective actions
$g \mapsto v(g^{-1})^*, \lambda(g^{-1})^*, u(g^{-1})^*$.
Note that if $T$ is a $G$-equivariant projection from $X$ onto $Y$, then  $T^*$ is a $G$-equivariant lifting from $Y^*$ into $X^*$. Therefore:

\begin{fact}
If an exact sequence of $G$-spaces $G$-splits, then the dual sequence also $G$-splits.
\end{fact}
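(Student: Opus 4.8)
The plan is to dualize the $G$-equivariant projection supplied by the splitting of the original sequence, which is essentially the content of the remark preceding the statement; the work consists in verifying that the adjoint has the correct equivariance for the dual $G$-space structure. So suppose the exact sequence $0 \to Y \xrightarrow{i} X \xrightarrow{\pi} Z \to 0$ of $G$-spaces $G$-splits, with actions $u(g)$, $\lambda(g)$, $v(g)$ on $Y$, $X$, $Z$. By the equivalence (i)$\Leftrightarrow$(iii) in the Castillo--Ferenczi definition there is a $G$-equivariant projection of $X$ onto $Y$; corestricting it to its image, I regard it as a surjection $T: X \to Y$ with $Ti = Id_Y$, whose equivariance reads $T\lambda(g) = u(g)T$ for all $g \in G$ (using that $\lambda$ restricts to $u$ on the invariant subspace $Y$).

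First I would form the adjoint $T^*: Y^* \to X^*$ and check that it splits the dual sequence $0 \to Z^* \xrightarrow{\pi^*} X^* \xrightarrow{i^*} Y^* \to 0$ at the Banach-space level. Since $i^*$ is the quotient map of the dual sequence, I need a right inverse of $i^*$, and indeed $i^* T^* = (Ti)^* = (Id_Y)^* = Id_{Y^*}$. Thus $T^*$ is a bounded linear lifting for the dual sequence.

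The one substantive point is the equivariance of $T^*$ with respect to the dual actions, which by the stated convention are $u(g^{-1})^*$ on $Y^*$ and $\lambda(g^{-1})^*$ on $X^*$. Dualizing the identity $T\lambda(g) = u(g)T$ gives $\lambda(g)^* T^* = T^* u(g)^*$ for all $g$; replacing $g$ by $g^{-1}$ turns this into $\lambda(g^{-1})^* T^* = T^* u(g^{-1})^*$, which is precisely the $G$-equivariance of $T^*$ as a map between the dual $G$-spaces $Y^*$ and $X^*$. By (i)$\Leftrightarrow$(iii) applied to the dual sequence, the existence of this $G$-equivariant lifting shows that the dual sequence $G$-splits.

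The main---and essentially only---obstacle is the bookkeeping around the inversion $g \mapsto g^{-1}$ built into the definition of the dual action: it is exactly this twist that converts the commutation relation obtained by taking adjoints into honest equivariance for the dual structure, and omitting it would produce an anti-equivariant rather than an equivariant map. All remaining ingredients---boundedness of adjoints, the identity $(Ti)^* = i^* T^*$, and the exactness of the dual sequence---are routine or already recorded in the excerpt.
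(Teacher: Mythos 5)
Your argument is correct and is exactly the paper's route: the paper records the same observation in one line just before the statement ("if $T$ is a $G$-equivariant projection from $X$ onto $Y$, then $T^*$ is a $G$-equivariant lifting from $Y^*$ into $X^*$"), and your proposal simply fills in the verification, including the point that the $g\mapsto g^{-1}$ twist in the dual action is what makes the adjoint genuinely equivariant. No discrepancies to report.
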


These considerations lead us to the following definition and observation:

\begin{defin} A $G$-space $X$ has the $G$-equivariant dual lifting property if
there is a bounded and $G$-equivariant
projection from $\Lip_0(X)$ onto $X^*$; Equivalently, if
the sequence
$$0 \rightarrow X^* \rightarrow \Lip_0(X) \rightarrow \Lip_0(X)/X^* \rightarrow 0$$
of $G$-spaces $G$-splits.
\end{defin}

\begin{obs} If a $G$-space has the $G$-equivariant lifting property, then it has the dual $G$-equivariant lifting property.
\end{obs}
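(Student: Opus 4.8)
The plan is to read the statement off the duality machinery already in place. Suppose $X$ has the $G$-equivariant lifting property. By the Fact characterizing that property, the exact sequence of $G$-spaces
$$0 \rightarrow {\rm Ker}\,\beta_{X}\rightarrow \F(X) \rightarrow X \rightarrow 0$$
$G$-splits. I would then apply the Fact that the dual of a $G$-splitting exact sequence of $G$-spaces again $G$-splits. Since the dual sequence has already been identified above as
$$0 \rightarrow X^* \rightarrow \Lip_0(X) \rightarrow \Lip_0(X)/X^* \rightarrow 0,$$
this yields that the latter $G$-splits, which is precisely the dual $G$-equivariant lifting property.

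The only point needing verification is that the abstract dual action produced by the general construction coincides with the natural actions meant in the definition of the dual property, namely the standard dual action $g \mapsto (g^{-1})^*$ on $X^*$ and the action $(g\cdot f)(x)=f(g^{-1}x)$ on $\Lip_0(X)=\F(X)^*$. For $X^*$ this is immediate from the recipe $g \mapsto v(g^{-1})^*$ with $v$ the original action of $G$ on $X$. For $\Lip_0(X)$ I would check it on the generators $\delta(x)$: using $\widetilde{g^{-1}}\,\delta(x)=\delta(g^{-1}x)$ one gets $\langle(\widetilde{g^{-1}})^*f,\delta(x)\rangle = f(g^{-1}x) = (g\cdot f)(x)$, so the two actions agree on $\lspan\,\delta(X)$ and hence on all of $\F(X)$ by linearity and continuity.

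I do not expect a genuine obstacle here: the substance of the statement is entirely carried by the dual-sequence Fact and by the prior identification of the dual sequence. The only thing that could trip up the argument is a clash of conventions for the dual action (the placement of the inverse $g^{-1}$), which is exactly what the short computation on $\delta(X)$ above is meant to pin down.
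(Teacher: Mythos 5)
Your proof is correct and follows exactly the route the paper intends: the paper gives no separate proof of this observation, stating only that it follows from "these considerations," namely the Fact identifying the $G$-equivariant lifting property with $G$-splitting of $0 \to {\rm Ker}\,\beta_X \to \F(X) \to X \to 0$, the Fact that dual sequences of $G$-split sequences $G$-split, and the prior identification of the dual sequence. Your extra check that the abstract dual action agrees with the natural action on $\Lip_0(X)$ via the computation on $\delta(X)$ is a sensible precaution and consistent with the paper's conventions.
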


The converse is false in general: take $G=\{Id\}$. Any space has the dual $\{Id\}$-equivariant lifting property, but any space failing the lifting property obviously fails the $\{Id\}$-equivariant lifting property. See \cite{godefroy2003lipschitz} Theorem 4.3 for examples of such spaces.

\begin{question}
Find a $G$-space which fails the dual $G$-equivariant lifting property. \end{question}

\subsection{Examples}

\begin{example}
\label{ex:unconditional_basis}
If $X$ has a $1$-unconditional basis, the group $U \simeq \{-1,1\}^\omega$ of multiplications by signs on each coordinate will act also as isometries on $\F(X)$. We shall address the question of the existence of a $U$-equivariant lifting in the next section, where we provide two proofs of an affirmative answer.
\end{example}

\begin{example}
If $M$ is a metric space with a base point $0$ and $G$ is a group acting on $M$ by isometries fixing $0$, then $\F(M)$ has the $\tilde{G}$-equivariant isometric lifting property, where $\tilde{G}$ is the group of linear extensions of isometries from $G$ to $\F(M)$, $\tilde{G}=\{\tilde{g}, g \in G\}$. 
\end{example}
\begin{proof}
By \cite[Lemma 2.10]{godefroy2003lipschitz}, the Lipschitz-free space $\F(M)$ has the isometric lifting property with a lifting given by the linear extension $T$ of the isometric embedding $\delta_{\F(M)}\delta_M: M\to \F(\F(M))$, i.e. $T:\F(M)\to\F(\F(M))$ is the linear mapping satisfying $T\delta_M(p)=\delta_{\F(M)}\delta_M(p)$ for every $p\in M$. (The statement in \cite{godefroy2003lipschitz} is given only for $M$ being a Banach space, but the argument works in general). We will verify that $T$ is $\tilde{G}$-equivariant. Indeed, let $g\in G$ and $p\in M$. Then
$$
T\tilde{g}(\delta_M(p))=T\delta_M(g(p))=\delta_{\F(M)}\delta_M(g(p))=\delta_{\F(M)}\tilde{g}\delta_M(p)
=\tilde{\tilde{g}}\delta_{\F(M)}\delta_M(p)= \tilde{\tilde{g}}T(\delta_M(p))   
$$
Because $\overline{\lspan\delta_M(M)}=\F(M)$, by linearity and continuity $T\tilde{g}=\tilde{\tilde{g}}T$ on $\F(M)$.
\end{proof}

\section{First results: actions of compact groups}
In what follows we fix a $G$-space $X$ (and its associated Lipschitz-free space $\F(X)$).
\begin{defin} For $C \geq 1$ we let $\TT$ (resp. $\TT_C$) be the class of linear right inverses to $\beta_X$ (resp. of norm at most $C$).

We let $\TT(G) \subseteq \TT$ (resp. $\TT_C(G) \subseteq \TT_C$) be the class of $G$-equivariant linear right inverses to $\beta_X$ (resp. of norm at most $C$). \end{defin}

The following observation is immediate:

\begin{obs} If $G$ is finite and $X$ has the lifting property
then $\TT(G) \neq \emptyset$. \end{obs}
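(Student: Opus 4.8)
The plan is to average an arbitrary lifting over the finite group $G$ to make it equivariant. Since $X$ has the lifting property, we may fix some $T_0 \in \TT$, i.e. a bounded linear right inverse to $\beta_X$. The idea is to symmetrize $T_0$ by defining
$$T = \frac{1}{|G|} \sum_{g \in G} \tilde{g}^{-1} T_0\, g,$$
where $\tilde{g}$ denotes the isometry (or automorphism) induced on $\F(X)$ by the action of $g$, and the sum is finite because $G$ is finite. I expect this to be the standard Reynolds-operator averaging argument adapted to the $G$-space setting.

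First I would check that $T$ is a bounded linear operator from $X$ to $\F(X)$; this is immediate since it is a finite average of bounded linear operators $\tilde{g}^{-1} T_0 g$, each of norm at most $\|\tilde{g}^{-1}\| \|T_0\| \|g\|$. Second I would verify that $T$ is still a right inverse to $\beta_X$: using that $\beta_X$ is $G$-equivariant (i.e. $\beta_X \tilde{g} = g \beta_X$, so $\beta_X \tilde{g}^{-1} = g^{-1}\beta_X$) together with $\beta_X T_0 = \id_X$, one computes
$$\beta_X T = \frac{1}{|G|}\sum_{g\in G} \beta_X \tilde{g}^{-1} T_0\, g = \frac{1}{|G|}\sum_{g\in G} g^{-1} \beta_X T_0\, g = \frac{1}{|G|}\sum_{g\in G} g^{-1} g = \id_X.$$
Third I would confirm the equivariance $T h = \tilde{h} T$ for every $h \in G$. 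Substituting and reindexing the sum by $g \mapsto gh$ (a bijection of $G$, using that $G$ is a group), and exploiting that $g \mapsto \tilde{g}$ is a homomorphism so that $\widetilde{gh} = \tilde{g}\tilde{h}$ and hence $\tilde{g}^{-1} = \tilde{h}\,\widetilde{(gh)}^{-1}$, one finds $Th = \tilde{h}T$.

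I do not anticipate a serious obstacle here, as finiteness of $G$ removes all analytic difficulties (convergence, measurability of the averaging) that arise for infinite groups. The only points that require minor care are the bookkeeping of inverses — making sure to use $\tilde{g}^{-1}$ rather than $\tilde{g}$ so that the right-inverse identity survives the twisting by the $G$-action on both sides — and recalling that $g \mapsto \tilde{g}$ is indeed a group homomorphism, which follows from the functoriality of the linearization construction described in the introduction. If one wants the isometric version, note that averaging does not in general preserve norm $1$, so the observation as stated only yields $\TT(G) \neq \emptyset$ and not $\TT_1(G) \neq \emptyset$; this is consistent with the statement, which asserts only nonemptiness of $\TT(G)$.
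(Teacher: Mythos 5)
Your proof is correct and is essentially the paper's own argument: the paper defines $T={\rm Ave}_{g \in G}\, \tilde{g}\, T_0\, g^{-1}$, which coincides with your average after the reindexing $g \mapsto g^{-1}$ (using $\tilde{g}^{-1}=\widetilde{g^{-1}}$), and relies on the same two facts you verify, namely the $G$-equivariance of $\beta_X$ and the homomorphism property $\widetilde{h}\widetilde{g}=\widetilde{hg}$. Your closing remark on norms is also consistent with the paper, which addresses the isometric case only later, in the SOT-compact setting.
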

  \begin{proof} Just fix some $T_0$ in $\TT$, let
$$T={\rm Ave}_{g \in G} \tilde{g} T_0 g^{-1},$$
and note that for any $g,h\in G$ we have  $\widetilde{h}\widetilde{g}=\widetilde{hg}$ (this is easy to verify for every $\delta(x)$ and then follows by linearity and continuity).
\end{proof}

\subsection{The compact case}
The question now is whether we can define this kind of average in a meaningful way when $G$ is infinite. As we shall now investigate, this is possible in particular when we impose compactness with respect to the strong operator topology (SOT) induced by the space of automorphisms of $X$, which is denoted by $B(X)$. We begin by a simple continuity statement.

\begin{lemma}\label{SOTSOT}
Given a Banach space $X$, the map $g \mapsto \tilde{g}$ is continuous from $(B(X),SOT)$ to $(B(\F(X)),SOT)$.
\end{lemma}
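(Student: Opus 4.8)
The plan is to unwind both strong operator topologies into pointwise statements and then transfer convergence from the generating set $\delta(X)$ to all of $\F(X)$ by a density argument. Recall that a net $g_i\to g$ in $(B(X),\mathrm{SOT})$ means $\|g_i x-gx\|\to 0$ for every $x\in X$, and that $\tilde g_i\to\tilde g$ in $(B(\F(X)),\mathrm{SOT})$ means $\|\tilde g_i\mu-\tilde g\mu\|\to 0$ for every $\mu\in\F(X)$. Since the SOT on the target is exactly the initial topology generated by the evaluation maps $S\mapsto S\mu$, $\mu\in\F(X)$, it suffices to fix $\mu$ and show that $g\mapsto\tilde g\mu$ is continuous; equivalently, to show that every SOT-convergent net $g_i\to g$ satisfies $\tilde g_i\mu\to\tilde g\mu$ in norm.

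First I would verify the claim on the generators. For $\mu=\delta(x)$ the defining property of the linearization gives $\tilde g_i\delta(x)=\delta(g_i x)$ and $\tilde g\,\delta(x)=\delta(gx)$, so, using that $\delta\colon X\to\F(X)$ is an isometric embedding,
$$\|\tilde g_i\delta(x)-\tilde g\,\delta(x)\|=\|\delta(g_i x)-\delta(gx)\|=\|g_i x-gx\|\ \longrightarrow\ 0.$$
Hence $\tilde g_i\mu\to\tilde g\mu$ whenever $\mu=\delta(x)$, and, since each $\tilde g_i$ and $\tilde g$ is linear, also whenever $\mu\in\lspan\,\delta(X)$.

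To pass from the dense subspace $\lspan\,\delta(X)$ to an arbitrary $\mu\in\F(X)$, fix $\e>0$ and choose $\nu\in\lspan\,\delta(X)$ with $\|\mu-\nu\|<\e$. Writing
$$\|\tilde g_i\mu-\tilde g\mu\|\le\|\tilde g_i(\mu-\nu)\|+\|\tilde g_i\nu-\tilde g\nu\|+\|\tilde g(\nu-\mu)\|,$$
the middle term tends to $0$ by the previous step, while the two outer terms are bounded by $(\|\tilde g_i\|+\|\tilde g\|)\e=(\|g_i\|+\|g\|)\e$, using the identity $\|\tilde g_i\|=\Lip(g_i)=\|g_i\|$. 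I expect the main obstacle to be precisely the control of this last quantity: the standard three-$\e$ argument closes only once we know the net $\{\tilde g_i\}$ is \emph{uniformly bounded}, i.e. $\sup_i\|g_i\|<\infty$. For a \emph{sequence} this is automatic from the uniform boundedness principle, since pointwise convergence of $g_i x$ forces $\sup_i\|g_i\|<\infty$; for a general net it is the delicate point, as SOT-convergent nets of automorphisms need not be norm-bounded. In the situations relevant here the relevant family is SOT-compact (or at least norm-bounded): for each $x$ the orbit $\{g_i x\}$ is then relatively compact, hence bounded, and Banach--Steinhaus again yields $\sup_i\|g_i\|=:C<\infty$. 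Granting this bound, the outer terms are at most $(C+\|g\|)\e$, so $\limsup_i\|\tilde g_i\mu-\tilde g\mu\|\le(C+\|g\|)\e$; letting $\e\to 0$ gives $\tilde g_i\mu\to\tilde g\mu$, and the asserted continuity of $g\mapsto\tilde g$ follows.
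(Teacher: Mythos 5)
Your proof is essentially the same as the paper's: verify convergence on the generators $\delta(x)$ using that $\delta$ is an isometry, then pass to all of $\F(X)$ by linearity, density of $\lspan\,\delta(X)$, and uniform boundedness. You are in fact more explicit than the paper about the one delicate point --- that an SOT-convergent \emph{net} need not be norm-bounded, so the statement really concerns the restriction to bounded (in the applications, SOT-compact) groups --- which the paper dispatches with the phrase ``from the uniform boundedness and linearity of $g_\alpha$ and $g$'' without further comment.
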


\begin{proof}
Let $g_{\alpha}\xrightarrow{SOT} g$ in $G$. Fix any $x \in X$. Then
$$\|\widetilde{g_{\alpha}}(\delta(x))-\widetilde{g}(\delta(x))\|=\|\delta(g_{\alpha}(x))-\delta(g(x))\|=\|g_{\alpha}(x)-g(x)\|\to 0.$$
So, from the uniform boundedness and linearity of $g_{\alpha}$ and $g$ we get that 
$\widetilde{g_{\alpha}}\xrightarrow{SOT} \widetilde{g}$ as well. (Note that in fact the converse also holds.)
\end{proof}

Next we will extend the averaging argument to the case when $G$ is compact in SOT.
\begin{prop}\label{SOTcompact}
Let $X$ be a Banach space with the lifting property and $G$ be a bounded, SOT compact group of automorphisms on $X$ with associated Haar measure $\mu$. Then for any $T_0\in\TT$, the mapping $T:X\to\F(X)$ defined by
$$T(x)=\int_{g \in G} \tilde{g} T_0 g^{-1}(x) d\mu\quad\textup{for every}\,\,x\in X$$ is a $G$-equivariant element of $\TT$.
Furthermore, if $T_0 \in \TT_1$ and $G$ is a group of isometries, then $T \in \TT_1$.
\end{prop}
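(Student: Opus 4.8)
The plan is to verify, in order, that the integral defining $T$ makes sense, that $T$ is a bounded linear right inverse of $\beta_X$, that it is $G$-equivariant, and finally to estimate its norm. The first point is the delicate one. Fixing $x\in X$, I would study the map $\Phi_x\colon G\to\F(X)$ given by $\Phi_x(g)=\tilde{g}T_0g^{-1}(x)$ and show it is continuous when $G$ carries the SOT. This presupposes that $(G,SOT)$ is a compact topological group, so that inversion $g\mapsto g^{-1}$ is SOT-continuous and the normalized Haar measure $\mu$ exists and is left-invariant; on the uniformly bounded set $G$ joint continuity of multiplication is an easy two-term estimate, and continuity of inversion follows from compactness. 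Granting this, continuity of $\Phi_x$ decomposes into three ingredients: $g\mapsto g^{-1}(x)$ is continuous into $X$, $T_0$ is bounded linear, and $g\mapsto\tilde{g}$ is SOT-continuous by Lemma \ref{SOTSOT}. Combining them through a standard two-term estimate, using the uniform bound $\|\tilde{g}\|=\Lip(g)\le M$ (where $M$ bounds $G$ in operator norm), gives continuity of $\Phi_x$. Since $G$ is compact, $\Phi_x$ has compact, hence bounded and separably valued, range, so it is Bochner integrable and $T(x)$ is well defined.

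Linearity of $T$ is immediate, and boundedness follows from the estimate $\|T(x)\|\le\int_G\|\tilde{g}\|\,\|T_0\|\,\|g^{-1}\|\,\|x\|\,d\mu\le M^2\|T_0\|\,\|x\|$, using that $\mu$ is a probability measure. To see that $T\in\TT$, I would push $\beta_X$ through the integral, which is legitimate since $\beta_X$ is bounded linear and hence commutes with the Bochner integral, and then invoke the equivariance $\beta_X\tilde{g}=g\beta_X$ recalled in the introduction together with $\beta_X T_0=Id$. The integrand collapses to $\beta_X\tilde{g}T_0g^{-1}(x)=g\,\beta_X T_0\,g^{-1}(x)=gg^{-1}(x)=x$, constant in $g$, whence $\beta_X T(x)=\int_G x\,d\mu=x$.

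For $G$-equivariance, fix $h\in G$ and write $T(h(x))=\int_G\tilde{g}T_0g^{-1}h(x)\,d\mu(g)$. The substitution $g=hk$, justified by left-invariance of $\mu$, together with $\widetilde{hk}=\tilde{h}\tilde{k}$, transforms the integrand into $\tilde{h}\tilde{k}T_0k^{-1}(x)$; pulling the bounded operator $\tilde{h}$ out of the integral yields $T(h(x))=\tilde{h}T(x)$, which is precisely the relation $Th=\tilde{h}T$.

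Finally, in the isometric case each $g$, and therefore each $\tilde{g}$ and each $g^{-1}$, has norm one, so the boundedness estimate above improves to $\|T(x)\|\le\int_G\|g^{-1}(x)\|\,d\mu=\|x\|$, giving $T\in\TT_1$. The only genuine obstacle is the groundwork of the first paragraph, namely confirming that $(G,SOT)$ is a compact topological group with invariant Haar measure and that $\Phi_x$ is Bochner integrable; once that is secured, every remaining step is a routine manipulation of the integral through linearity, the equivariance of $\beta_X$, and the left-invariance of $\mu$.
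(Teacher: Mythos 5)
Your proposal is correct and follows essentially the same route as the paper: SOT-continuity of $g\mapsto g^{-1}$ and of $g\mapsto\tilde g$ (Lemma \ref{SOTSOT}) give norm-continuity of $g\mapsto\tilde gT_0g^{-1}(x)$ on the compact group, hence Bochner integrability; then $\beta_X$ is pushed through the integral using its equivariance, and the substitution $g=hk$ with left-invariance of $\mu$ yields equivariance of $T$. The only cosmetic difference is that the paper proves continuity of inversion by a direct two-term estimate using the uniform bound on $G$ rather than by appealing to compactness.
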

\begin{proof}
First, we show that for every $x\in X$, $T(x)$ is a well-defined element of $\F(X)$. Let $g_{\alpha}\xrightarrow{SOT} g$ in $G$ and let $\sup\{\|h\|, h\in G\}\leq C<\infty$. Then for any $x\in X$ we have 
$$\|g_{\alpha}^{-1}(x)-g^{-1}(x)\|=\|g_{\alpha}^{-1}gg^{-1}(x)-g_{\alpha}^{-1}g_{\alpha}g^{-1}(x)\|\leq C\|gg^{-1}(x)-g_{\alpha}g^{-1}(x)\|\to 0,$$
thus also $g_{\alpha}^{-1}\xrightarrow{SOT} g^{-1}$. By Lemma \ref{SOTSOT} we get that 
$\widetilde{g_{\alpha}}\xrightarrow{SOT} \widetilde{g}$ as well. Therefore, for a fixed $x\in X$, the function $g\mapsto \tilde{g} T_0 g^{-1}(x)$ from $G$ to $\F(X)$ is SOT to norm continuous, and hence $\mu$-measurable \cite[Proposition 2.15]{Ryan_2002}. Indeed,
\begin{align*}
\|\widetilde{g_{\alpha}} T_0 g_{\alpha}^{-1}(x)-\widetilde{g} T_0 g^{-1}(x)\|&=\|\widetilde{g_{\alpha}} T_0 g_{\alpha}^{-1}(x)-\widetilde{g_{\alpha}} T_0 g^{-1}(x)+\widetilde{g_{\alpha}} T_0 g^{-1}(x)-\widetilde{g} T_0 g^{-1}(x)\|\\
&\leq\|\widetilde{g_{\alpha}}\|\|T_0\|\|g_{\alpha}^{-1}(x)-g^{-1}(x)\|+\|\widetilde{g_{\alpha}} T_0 g^{-1}(x)-\widetilde{g} T_0 g^{-1}(x)\|\\
&\leq C\|T_0\|\|g_{\alpha}^{-1}(x)-g^{-1}(x)\|+\|\widetilde{g_{\alpha}} (T_0 g^{-1}(x))-\widetilde{g} (T_0 g^{-1}(x))\|\to 0,
\end{align*}
whenever $g_{\alpha}\xrightarrow{SOT}g$. Moreover, since 
\begin{equation}
\label{eq:norm_of_averaged_operator}
\|\tilde{g} T_0 g^{-1}(x)\|\leq C^2\|T_0\|\|x\|,
\end{equation}
the real-valued function $g\mapsto \|\tilde{g} T_0 g^{-1}(x)\|$ is SOT to $|\cdot|$ continuous and bounded. Hence it is $\mu$-integrable, and by the Bochner's Theorem \cite[Proposition 2.16]{Ryan_2002} we conclude that also $g\mapsto \tilde{g} T_0 g^{-1}(x)$ is Bochner $\mu$-integrable. This verifies that $T(x)\in\F(X)$. 

The linearity of the operator $T:X\to\F(X)$ is straightforward and \eqref{eq:norm_of_averaged_operator} implies that $\|T\|\leq C^2\|T_0\|$. To observe that $T$ is actually a lifting of $\beta$, we apply Proposition 2.18 from \cite{Ryan_2002} saying that the operators respect the Bochner integral:
$$\beta T(x)=\int_{g \in G} \beta\widetilde{g} T_0 g^{-1}(x) d\mu=\int_{g \in G} g\beta T_0 g^{-1}(x) d\mu=\int_{g \in G} g g^{-1}(x) d\mu=\int_{g \in G} x d\mu=x,$$
where the second equality follows from the $G$-equivariance of $\beta$ and the third one from the assumption that $T_0\in\TT$. We have thus obtained that $T\in\TT$. 

Finally, let $h\in G$. Recall that $\widetilde{h}\widetilde{g}=\widetilde{hg}$ for any $g\in G$. By \cite[Proposition 2.18]{Ryan_2002} again, we may write
\begin{align*}
\widetilde{h}T(x)&=\int_{g \in G} \widetilde{h}\tilde{g} T_0 g^{-1}(x) d\mu=\int_{g \in G} \widetilde{hg} T_0 g^{-1}(x) d\mu\\
&=\int_{f \in G} \widetilde{f} T_0 f^{-1}h(x) d\mu=Th(x),
\end{align*}
where we substituted $f=hg$ and relied on the left-invariance of $\mu$. This shows that $T$ is $G$-equivariant and finishes the proof of the main statement. The furthermore assertion follows immediately from \eqref{eq:norm_of_averaged_operator}. 
\end{proof}

\subsection{A complex version of the Lipschitz-free space when $M$ admits an action of the unit circle} 
We denote by $\T$ the unit circle as a multiplicative group. If $M$ is a pointed metric space with an isometric action of $\T$ fixing $0$,
$X$ is any complex Banach space, and $L$ is a Lipschitz map from $M$ into $X$ sending $0$ to $0$ and which is $\T$-equivariant, i.e. $L(e^{i\theta}.m)=e^{i\theta} Lm$ for all $m \in M$ and $\theta \in \R$, then 
we have the factorization $L=\overline{L} \delta$ with $\T$-equivariant maps. 
Note however that the induced action of $\T$ on $\F(M)$ does not turn it into a complex Banach space (in particular, we have $\tilde {i}.(\tilde{i}.\delta(m))=\delta((-1).m)$ instead of $-\delta(m)$).
Therefore, we consider the subspace 
$$Y_\T=\overline{\rm span}\{\delta(e^{i\theta}.m)-\cos\theta \delta(m) - \sin\theta \delta(i.m), m \in M, \theta \in \R\}$$
and the quotient $\F_{\C}(M):=\F(M)/Y_\T$. This is a complex space, under the complex law defined by $ i.(\delta(m)+Y_\T)=\delta(i.m)+Y_\T$
(in particular note that $e^{i\theta}$ acts isometrically on $\F_{\C}(M)$). If $\pi_\C: \F(M) \rightarrow \F_\C(M)$ is the associated quotient map, then 
the map $\delta_{\C}:=\pi_\C \delta$ is $1$-Lipschitz and $\T$-equivariant  
from $M$ into $\F_\C(M)$. Noting that $Y_\T \subseteq {\rm Ker}(\overline{L})$, we have the factorization $L=\tilde{L}\delta_\C$, where 
$\tilde{L}(\delta(m)+Y_\T):=\overline{L}(\delta(m))=Lm$.
We have $\|\delta_\C(m)-\delta_\C(n)\|=\sup_{f \in Y_\T^{\perp}, \|f\|=1} |f(m)-f(n)|$, and therefore $\delta_\C$ has in general no reason to be isometric or even injective.
However:

\begin{obs}\label{isometryT} If $M$ is a subset of a complex Banach space $X$ and the action of $\T$ on $M$ is induced  by the complex law in $X$, then $\delta_\C: M \rightarrow {\mathcal F}_{\C}(M)$ is isometric. Moreover, for any complex Banach space $Y$ and any $\T$-equivariant Lipschitz map $L: M\to Y$ such that $L(0)=0$, we have a factorization $L=\tilde{L}\delta_\C$ with a bounded linear operator $\tilde{L}:\F_\C(M)\to Y$ satisfying $\|\tilde{L}\|=\Lip(L)$.

\end{obs}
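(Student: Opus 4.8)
The plan is to treat the two assertions separately, using the duality formula $\|\delta_{\C}(m)-\delta_{\C}(n)\|=\sup\{|f(m)-f(n)| : f \in Y_\T^{\perp},\ \|f\|\le 1\}$ recorded just before the statement, where $Y_\T^{\perp}\subseteq \Lip_0(M)$ is the annihilator of $Y_\T$. The inequality $\|\delta_{\C}(m)-\delta_{\C}(n)\|\le d(m,n)$ is free, since $\delta_{\C}$ is $1$-Lipschitz. So for the isometry it suffices to produce, for each pair $m,n\in M$, a function $f\in Y_\T^{\perp}$ with $\Lip(f)\le 1$ and $|f(m)-f(n)|=\|m-n\|_X$.

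First I would identify the annihilator concretely: testing the generators of $Y_\T$ shows that $f\in Y_\T^{\perp}$ exactly when $f\in\Lip_0(M)$ satisfies $f(e^{i\theta}m)=\cos\theta\, f(m)+\sin\theta\, f(im)$ for all $m\in M$ and $\theta\in\R$. The main idea---and the step I expect to carry the real content---is that the real part of a \emph{complex}-linear functional automatically lies in $Y_\T^{\perp}$. Concretely, fix $m,n$ and use Hahn--Banach to choose $\phi\in X^{*}$ (complex dual), $\|\phi\|=1$, with $\phi(m-n)=\|m-n\|_X$. Put $f=\Re\phi|_M$. Then $f(0)=0$, and since $\|\Re\phi\|=\|\phi\|$ for complex-linear functionals, $\Lip(f)\le 1$. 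Writing $\phi=u+iv$ with $u=\Re\phi$, the identity $v(x)=-u(ix)$, valid for complex-linear $\phi$, together with the $\T$-stability of $M$ (so that $im,e^{i\theta}m\in M$) gives
$$f(e^{i\theta}m)=\Re\big(e^{i\theta}\phi(m)\big)=\cos\theta\, u(m)-\sin\theta\, v(m)=\cos\theta\, f(m)+\sin\theta\, f(im),$$
which is exactly the defining relation for $Y_\T^{\perp}$. Finally $f(m)-f(n)=\Re\phi(m-n)=\|m-n\|_X$, so $|f(m)-f(n)|=d(m,n)$, and the reverse inequality follows, proving $\delta_{\C}$ isometric.

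For the factorization I would build $\tilde L$ from the real linearization. Let $\overline{L}\colon\F(M)\to Y$ be the $\R$-linear extension of $L$, so $\|\overline{L}\|=\Lip(L)$ by the universal property. The $\T$-equivariance of $L$ gives $\overline{L}(\delta(e^{i\theta}m)-\cos\theta\,\delta(m)-\sin\theta\,\delta(im))=(e^{i\theta}-\cos\theta-i\sin\theta)L(m)=0$, so $Y_\T\subseteq\ker\overline{L}$ and $\overline{L}$ drops to a bounded $\R$-linear map $\tilde L\colon\F_{\C}(M)\to Y$ with $\tilde L\,\pi_{\C}=\overline{L}$; in particular $\tilde L\delta_{\C}=L$. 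Because $Y_\T\subseteq\ker\overline{L}$, the standard quotient estimate gives $\|\tilde L\|\le\|\overline{L}\|=\Lip(L)$. Complex-linearity of $\tilde L$ is checked on the dense set $\{\delta(m)+Y_\T\}$: by the definition of the complex law on $\F_{\C}(M)$ and the $\C$-linearity of $L$ on $M$,
$$\tilde L\big(i.(\delta(m)+Y_\T)\big)=\tilde L(\delta(im)+Y_\T)=L(im)=iL(m)=i\,\tilde L(\delta(m)+Y_\T),$$
and the relation $\tilde L(i\cdot z)=i\,\tilde L(z)$ then extends by $\R$-linearity and continuity. Since $\delta_{\C}$ is isometric, $\|L(m)-L(n)\|=\|\tilde L(\delta_{\C}m-\delta_{\C}n)\|\le\|\tilde L\|\,\|m-n\|_X$ yields $\Lip(L)\le\|\tilde L\|$, whence $\|\tilde L\|=\Lip(L)$.

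The only genuinely delicate point is the isometry's lower bound, i.e.\ showing that $Y_\T^{\perp}$ is large enough to norm every difference $m-n$; everything reduces to the observation that restrictions to $M$ of real parts of complex-linear functionals supply such norming elements of $Y_\T^{\perp}$, after which both assertions are routine.
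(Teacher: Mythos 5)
Your proof is correct, but the isometry part takes a genuinely different (dual) route from the paper's. The paper observes that the restriction $\beta_M$ of the barycenter $\beta_X$ to $\F(M)$ kills $Y_\T$, hence descends to a norm-one $\C$-linear map $\beta_{\C,M}\colon \F_\C(M)\to X$ with $\beta_{\C,M}\delta_\C=\mathrm{Id}_M$; the isometry of $\delta_\C$ then falls out in one line from $\|m-n\|=\|\beta_{\C,M}(\delta_\C m-\delta_\C n)\|\le\|\delta_\C m-\delta_\C n\|\le\|m-n\|$. You instead work in the dual, identifying $Y_\T^{\perp}=\F_\C(M)^*$ as the Lipschitz functions satisfying $f(e^{i\theta}m)=\cos\theta f(m)+\sin\theta f(im)$ and producing norming elements as real parts of Hahn--Banach functionals; note that your $f=\Re\phi|_M$ is exactly $\Re\phi\circ\beta_{\C,M}$ restricted to $\delta(M)$, so the two arguments are dual to one another. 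The paper's version is shorter and the operator $\beta_{\C,M}$ is reused later (e.g.\ in the proposition embedding a separable complex $X$ into $\F_\C(X)$), while yours buys an explicit description of the dual of $\F_\C(M)$, which is of independent interest. For the ``moreover'' part both proofs are essentially identical (the paper compresses it to ``by a direct computation''); your verification is complete, the only cosmetic slip being the phrase ``$\C$-linearity of $L$ on $M$'' where what is actually used, and what your displayed computation correctly invokes, is the $\T$-equivariance $L(im)=iL(m)$.
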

\begin{proof} 
Let $\beta_M: \F(M) \rightarrow X$ be the restriction of $\beta_X$ to $\F(M)$, and note that $Y_\T \subseteq ker \beta_M$; so we also have a  $\C$-linear map
 $\beta_{\C,M}: \F_\C(M) \rightarrow X$ defined as a quotient map by $\beta_{\C,M}(\phi+Y_\T) = \beta_M(\phi)$, and which has norm $1$. The relation 
 $\beta_{\C,M}\delta_{\C}=Id_M$ holds, and $\delta_\C$ is therefore an isometric map in this case. By a direct computation, we can verify that the linear extension of the map $\tilde{L}(\delta(m)+Y_\T):=\overline{L}(\delta(m))=Lm$ has norm equal to $\Lip(L)$.
\end{proof}

 Alternatively, consider the $\sqrt{2}$-Lipschitz map $L: M \rightarrow \F(M)$ defined by
 $$L(m):=\int_{\theta}(\cos\theta\delta(e^{-i\theta}m)+\sin\theta\delta(ie^{-i\theta}m))
 \frac{d\theta}{2\pi}.$$ This is well defined because the mapping $f_m:[0,2\pi]\to\F(M)$ given by $f_m(\theta)=\cos\theta\delta(e^{-i\theta}m)+\sin\theta\delta(ie^{-i\theta}m)$ is continuous and with the range in a separable (sub)space, hence measurable, and bounded, hence Bochner integrable.
 It can  be checked  
 that
 $$L(e^{i\alpha}.m)=\cos\alpha L(m)+\sin\alpha L(i.m), \forall \alpha \in \R.$$

 Indeed the change of variable $\theta \mapsto \theta+\alpha$ easily gives
 $$L(e^{i\alpha}.m)-\cos\alpha L(m)=\sin\alpha
 \big(\int_\theta \cos(\theta)\delta(e^{-i\theta}im)-\sin(\theta)\delta(e^{-i\theta}m)\frac{d\theta}{2\pi}\big)$$
 Regarding the second integral on the right, the change of variable $\theta \rightarrow \theta+\pi$ gives
 $$\int_\theta -\sin(\theta)\delta(e^{-i\theta}m)\frac{d\theta}{2\pi}=\int_\theta \sin(\theta)\delta(-e^{-i\theta}m)\frac{d\theta}{2\pi}$$
 and therefore, as expected,
 $L(e^{i\alpha}.m)-\cos\alpha L(m)=\sin\alpha L(i.m).$

 Define a linear map $Q$ on $\F(M)$ by $Q=\overline{L}$, i.e. $$Q\delta(m):=\int_{\theta}(\cos\theta\delta(e^{-i\theta}m)+\sin\theta\delta(ie^{-i\theta}m))
 \frac{d\theta}{2\pi}.$$ This map has norm at most $\sqrt{2}$, and we have
 $Q\delta(e^{i\theta}.m)=\cos\theta Q\delta(m)+\sin\theta Q\delta(i.m)$. 
 In other words, $Y_\T \subseteq Ker Q$.
 Let $P=Id-Q$. We note the formula 
 $$P\delta(m)=\int_\theta (\delta(m)-
 \cos\theta\delta(e^{-i\theta}m)-\sin\theta\delta(ie^{-i\theta}m))
 \frac{d\theta}{2\pi} \in Y_\T,$$ and it follows that $Im P \subseteq Y_\T$. This implies that $QP=0$, i.e. $P$ (and $Q$) are projections. Therefore $Im P = Ker Q$ and therefore $Y_\T=Ker Q$ as well.
 Therefore we have an isomorphic identification of $Z_\T:={\rm Im}Q$ with $\F_\C(M)$ given by $z \mapsto z+Y_\T$. Summing up:

\begin{prop} The space $\F_\C(M)$ is linearly isomorphic to the complemented subspace $Z_\T$ of $\F(M)$ defined as
$$Z_\T=\overline{{\rm span}}\left\{\int_{\theta}(\cos\theta\delta(e^{-i\theta}m)+\sin\theta\delta(ie^{-i\theta}m))
 \frac{d\theta}{2\pi},\, m \in M\right\}.$$

\end{prop}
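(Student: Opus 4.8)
The plan is to read off the isomorphism directly from the two complementary projections $P$ and $Q$ already constructed in the discussion preceding the statement. Everything rests on the facts established there: that $Q$ is a bounded linear projection on $\F(M)$ with ${\rm Ker}\,Q = Y_\T$, and that $P = Id - Q$ is the complementary projection with ${\rm Im}\,P = Y_\T$. These yield a topological direct sum decomposition $\F(M) = Y_\T \oplus Z_\T$, where $Z_\T = {\rm Im}\,Q$, and the natural candidate for the isomorphism is the restriction to $Z_\T$ of the quotient map $\pi_\C \colon \F(M) \to \F(M)/Y_\T = \F_\C(M)$, namely $z \mapsto z + Y_\T$.

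First I would check that the closed subspace $Z_\T$ appearing in the statement really coincides with ${\rm Im}\,Q$. Since $Q$ is a bounded projection, its range is closed; and since $\F(M) = \overline{\rm span}\,\delta(M)$ while $Q\delta(m) = L(m) = \int_{\theta}(\cos\theta\,\delta(e^{-i\theta}m) + \sin\theta\,\delta(ie^{-i\theta}m))\frac{d\theta}{2\pi}$, continuity and linearity of $Q$ give ${\rm Im}\,Q = \overline{\rm span}\{Q\delta(m) : m \in M\}$, which is exactly the space $Z_\T$ as defined in the proposition.

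Next I would verify that $\pi_\C|_{Z_\T}$ is a bounded linear bijection onto $\F_\C(M)$. Boundedness (indeed with norm at most $1$) is inherited from the quotient map. Injectivity is immediate because $Z_\T \cap Y_\T = {\rm Im}\,Q \cap {\rm Ker}\,Q = \{0\}$ for the projection $Q$; surjectivity follows since for any $\phi \in \F(M)$ one has $\phi - Q\phi = P\phi \in Y_\T$, so $\phi + Y_\T = Q\phi + Y_\T$ with $Q\phi \in Z_\T$.

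The only point requiring a word of care is the boundedness of the inverse, and here I would sidestep the open mapping theorem by writing the inverse explicitly. Define $J \colon \F_\C(M) \to Z_\T$ by $J(\phi + Y_\T) = Q\phi$. This is well defined because $Q$ vanishes on $Y_\T = {\rm Ker}\,Q$, so $Q\phi$ is independent of the chosen representative; consequently $\|J(\phi + Y_\T)\| = \|Q\phi'\| \leq \|Q\|\,\|\phi'\|$ for every representative $\phi'$, whence $\|J\| \leq \|Q\| \leq \sqrt{2}$. One then checks $\pi_\C \circ J = Id$ and $J \circ (\pi_\C|_{Z_\T}) = Id_{Z_\T}$ (using $Qz = z$ for $z \in {\rm Im}\,Q$), so $\pi_\C|_{Z_\T}$ and $J$ are mutually inverse bounded linear maps, giving the claimed linear isomorphism $\F_\C(M) \cong Z_\T$. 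Since essentially all the structural work is done in the preamble to the statement, I expect no serious obstacle; the main thing to get right is the bookkeeping identifying ${\rm Ker}\,\pi_\C = Y_\T$ with ${\rm Ker}\,Q = {\rm Im}\,P$, so that $Z_\T$ genuinely complements it and the quotient map restricts to a homeomorphism.
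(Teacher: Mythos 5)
Your proposal is correct and follows essentially the same route as the paper: the paper's argument is exactly the identification of $Z_\T={\rm Im}\,Q$ with $\F(M)/Y_\T=\F_\C(M)$ via $z\mapsto z+Y_\T$, using the complementary projections $P$ and $Q$ with ${\rm Ker}\,Q=Y_\T={\rm Im}\,P$ built in the preceding discussion. You merely make explicit the routine verifications (that ${\rm Im}\,Q$ equals the stated closed span and that $\phi+Y_\T\mapsto Q\phi$ is the bounded inverse) which the paper leaves implicit.
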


For a quite different approach to a complex version of $\F(M)$, involving Lipschitz maps with complex values instead of real, see \cite{ACP}. 

 Recall that Godefroy and Kalton showed that all separable real Banach spaces have the isometric lifting property (\cite[Theorem 3.1]{godefroy2003lipschitz}). We derive a complex version of that result: 
 
 \begin{prop}
 If $X$ is a separable complex space, then $X$ is $\C$-linearly isometric to a $1$-complemented subspace of the complex space $\F_\C(X)$.
 \end{prop}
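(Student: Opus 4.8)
The plan is to realize the embedding as the quotient of an averaged, circle-equivariant lifting, and to read off the complex linearity directly from that equivariance. Regard $X$ as a separable \emph{real} Banach space; by the Godefroy--Kalton theorem it then has the isometric lifting property, so we may fix some $T_0\in\TT_1$. The unit circle acts on $X$ by the isometries $g_\theta\colon x\mapsto e^{i\theta}x$, and since $\theta\mapsto e^{i\theta}x$ is norm-continuous for each fixed $x$, the map $\theta\mapsto g_\theta$ is continuous from $\T$ into $(B(X),SOT)$; as $\T$ is compact, $\{g_\theta:\theta\in\R\}$ is an $SOT$-compact group of isometries with Haar measure $d\theta/2\pi$. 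Proposition \ref{SOTcompact}, including its isometric ``furthermore'' clause, then yields a $\T$-equivariant lifting
$$T(x)=\int_0^{2\pi}\widetilde{g_\theta}\,T_0\,g_\theta^{-1}(x)\,\frac{d\theta}{2\pi}\in\TT_1,$$
so that $\|T\|=1$, $\beta_X T=\id_X$, and $T(e^{i\theta}x)=\widetilde{g_\theta}\,T(x)$ for all $\theta$.

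Next I would set $S:=\pi_\C T\colon X\to\F_\C(X)$ and verify it is a $\C$-linear isometry. It is $\R$-linear, and since $\beta_{\C,X}\pi_\C=\beta_X$ on $\F(X)$ (this is immediate from the definition $\beta_{\C,X}(\phi+Y_\T)=\beta_X(\phi)$), it satisfies $\beta_{\C,X}S=\id_X$. The key identity is
$$e^{i\theta}\cdot\pi_\C(\phi)=\pi_\C(\widetilde{g_\theta}\,\phi),\qquad \phi\in\F(X),$$
which holds on each generator $\delta(m)$ straight from the definition of $Y_\T$ and of the complex law on $\F_\C(X)$, and hence on all of $\F(X)$ by linearity and continuity. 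Combined with the $\T$-equivariance of $T$ this gives $i\cdot S(x)=\pi_\C(\widetilde{g_{\pi/2}}\,T(x))=\pi_\C T(ix)=S(ix)$, so $S$ is $\C$-linear. For the isometry, $\|S\|\le\|\pi_\C\|\,\|T\|\le 1$, while $\|x\|=\|\beta_{\C,X}S(x)\|\le\|\beta_{\C,X}\|\,\|S(x)\|\le\|S(x)\|$ (using $\|\beta_{\C,X}\|=1$ from Observation \ref{isometryT}); the two inequalities force $\|S(x)\|=\|x\|$.

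Finally, I would produce the projection $P:=S\,\beta_{\C,X}\colon\F_\C(X)\to\F_\C(X)$. It is $\C$-linear as a composition of $\C$-linear maps, it is idempotent since $\beta_{\C,X}S=\id_X$ gives $P^2=S(\beta_{\C,X}S)\beta_{\C,X}=P$, its image equals $S(X)$, and $\|P\|\le\|S\|\,\|\beta_{\C,X}\|\le 1$, whence $\|P\|=1$. Thus $S$ is a $\C$-linear isometry of $X$ onto the $1$-complemented subspace $S(X)=\operatorname{Im}P$ of $\F_\C(X)$, as required.

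I expect the only genuine subtlety to be the complex-linearity step: one must keep three distinct circle actions apart --- the original isometries $g_\theta$ on $X$, their linearizations $\widetilde{g_\theta}$ on $\F(X)$, and the actual scalar multiplication $e^{i\theta}\cdot$ on the quotient $\F_\C(X)$ --- and check that $\pi_\C$ intertwines the latter two. Everything else is bookkeeping with norms, since the averaging machinery of Proposition \ref{SOTcompact} already supplies the $\T$-equivariant, norm-one lifting that makes the quotient map turn the real structure into the desired complex one.
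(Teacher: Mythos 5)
Your proposal is correct and follows essentially the same route as the paper: apply Proposition \ref{SOTcompact} to the SOT-compact circle action to get a $\T$-equivariant isometric lifting $T$, pass to $T_\C=\pi_\C T$, and use $\beta_{\C,X}T_\C=\id_X$ to obtain the isometry and the norm-one projection $T_\C\beta_{\C,X}$. The only difference is that you spell out the intertwining identity $e^{i\theta}\cdot\pi_\C(\phi)=\pi_\C(\widetilde{g_\theta}\phi)$ and the norm bookkeeping, which the paper leaves implicit.
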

 
 \begin{proof}
 Since $\T$ is compact, Proposition \ref{SOTcompact} applies: there exists an $\R$-linear lifting $T: X \rightarrow \F(X)$ which is $\T$-equivariant and isometric. The induced map $T_\C=\pi_\C T$ is $\C$-linear from $X$ into $\F_\C(X)$.
 Since $\beta_{\C,X} T_\C=Id_X$, this map is also an isometric embedding, and $X$ is $\C$-linearly isometric to a $1$-complemented subspace of $\F_\C(X)$
 (with projection $T_\C \beta_{\C,X}$).
 \end{proof}

A notable consequence of the aforementioned result \cite[Theorem 3.1]{godefroy2003lipschitz} is that, whenever $X$ is a real Banach space admitting an isometric embedding into a real Banach space $Y$, then $Y$ contains a linear subspace which is isometric to $X$ (\cite[Corollary 3.3]{godefroy2003lipschitz}). 
This does not hold in the complex case: Kalton's space $Z_2(\alpha), \alpha \neq 0,$ \cite{KaltonZ2alpha} does not $\C$-linearly embed (not even isomorphically) into its conjugate  \cite{cuellar}, although it is obviously ($\R$-linearly) isometric to it. See also \cite{ferenczi} for a complex space which is even totally incomparable with its conjugate. However the following seems to remain open: 

\begin{question}
Suppose that $X$ is a separable complex Banach space admitting a {\em $\mathbb C$-homogeneous} isometric embedding into a complex Banach space $Y$. Does $Y$ contain a subspace $\mathbb C$-isometrically isomorphic to $X$?  
\end{question}

Since the result of \cite{godefroy2003lipschitz} in the real case depends on arguments by Figiel \cite{figiel} which work for real Banach spaces, a different approach would be needed to investigate the question above.

\begin{question} Assume $\{0\} \cup \T$ is equipped with the natural action of $\T$.
Is the space $\F_\C(\{0\} \cup \T)$ isomorphic to $L_1([0,1],\C)$?
\end{question}

The space $\F(\{0\} \cup \T)$ is linearly isomorphic to $L_1=L_1([0,1],\R)$
(to see this, combine \cite[Lemma 2.8 (iii) and Theorem 4.21]{AACD21}} with \cite{Godard}).
So what we know about the above question is that the associated space $\F_\C(\{0\} \cup \T)$
is $\R$-linearly isomorphic to a complemented subspace of $L_1$.

\

The case of Hilbertian spaces is also particularly interesting. Indeed, the transitivity of the action of the unitary group $U(H)$ on the sphere of a Hilbert space implies that given $x \in H$ and $\phi \in \F(H)$  such that $\beta_{H}(\phi)= x$, there can be at most one $U(H)$-equivariant lifting $T$ such that $T(x)=\phi$. Furthermore:

\begin{obs} Assume $T: H \rightarrow \F(H)$ is a $U(H)$-equivariant lifting of $\beta_H$, and $x \in H$. Let $\phi=Tx \in \F(H)$. Let $u$ be any unitary operator on $[x]^{\perp}$, and $\tilde{u} \in B(H)$ be defined by the matrix $=\begin{pmatrix} Id & 0 \\ 0 & u \end{pmatrix}$, relatively to the decomposition
$H=[x] \oplus [x]^{\perp}$. Then
$\phi$ is invariant under $\tilde{u}$, i.e. for any $f \in Lip_0(H)$, $\phi(f)=\phi(f   \tilde{u})$.
\end{obs}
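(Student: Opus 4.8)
The plan is to recognize the invariance condition on $\phi$ as a fixed-point statement at the level of $\F(H)$, and then to read that fixed-point property directly off the $U(H)$-equivariance of $T$. The whole point is to translate the functional-level equality $\phi(f)=\phi(f\circ\tilde u)$ (for all $f\in \Lip_0(H)$) into the single linear identity $\widetilde{\tilde u}\,\phi=\phi$, where $\widetilde{\tilde u}\colon\F(H)\to\F(H)$ is the linearization of the isometry $\tilde u\colon H\to H$.

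First I would set up the duality dictionary between the precomposition action on $\Lip_0(H)=\F(H)^*$ and the linearization action on $\F(H)$. For $f\in\Lip_0(H)$ and $y\in H$ one computes $\langle f,\widetilde{\tilde u}\,\delta(y)\rangle=f(\tilde u y)=(f\circ\tilde u)(y)=\langle f\circ\tilde u,\delta(y)\rangle$, and since $\delta(H)$ spans a dense subspace of $\F(H)$ this shows $(\widetilde{\tilde u})^*f=f\circ\tilde u$. Consequently, for the fixed $\phi=Tx$ we have $\phi(f\circ\tilde u)=\langle(\widetilde{\tilde u})^*f,\phi\rangle=\langle f,\widetilde{\tilde u}\,\phi\rangle$ for every $f$, so the desired invariance $\phi(f)=\phi(f\circ\tilde u)$ for all $f\in\Lip_0(H)$ is \emph{equivalent} to $\widetilde{\tilde u}\,\phi=\phi$.

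Next I would verify this fixed-point identity. Since $u$ is unitary on $[x]^{\perp}$, the block operator $\tilde u=Id\oplus u$ is unitary on $H$, hence $\tilde u\in U(H)$, and the $U(H)$-equivariance of $T$ gives $\widetilde{\tilde u}\,T=T\tilde u$. Applying this to $x$ and using that $\tilde u$ restricts to the identity on $[x]$, so that $\tilde u x=x$, I obtain $\widetilde{\tilde u}\,\phi=\widetilde{\tilde u}\,Tx=T\tilde u x=Tx=\phi$, which is exactly the fixed-point equation, and therefore the claimed invariance.

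The argument is short and I expect no genuine obstacle; the only point that requires care is the duality bookkeeping in the first step, namely confirming that the precomposition map $f\mapsto f\circ\tilde u$ on $\Lip_0(H)$ is precisely the adjoint of the linearization $\widetilde{\tilde u}$ on $\F(H)$, so that the functional-level invariance becomes the clean identity $\widetilde{\tilde u}\,\phi=\phi$. Once that identification is in place, the conclusion is immediate from equivariance together with the trivial observation that $\tilde u$ fixes $x$.
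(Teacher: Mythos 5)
Your argument is correct, and it is clearly the intended one: the paper states this as an Observation without supplying a proof, and the evident justification is exactly what you wrote --- $\tilde u$ is unitary and fixes $x$, so equivariance gives $\widetilde{\tilde u}\,\phi=\widetilde{\tilde u}\,Tx=T\tilde u x=Tx=\phi$, and the duality identity $(\widetilde{\tilde u})^*f=f\circ\tilde u$ translates this into $\phi(f)=\phi(f\circ\tilde u)$ for all $f\in\Lip_0(H)$. No gaps; your care with the adjoint bookkeeping is appropriate but routine.
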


\begin{prob} Let $H$ be a Euclidean space. 
\begin{itemize}
\item 
Describe the set of $\phi$
 in $\F(H)$ such that $\|\beta_H (\phi)\|=\|\phi\|$.
\item Describe the subset $S$
of such $\phi$ for which there is a (necessarily unique) $U(H)$-equivariant isometric lifting $T$ such that $T \beta_H \phi=\phi$.
\item Describe the set $S' \supseteq S$ of $\phi \in \F(H)$ for which there is a (necessarily unique) $U(H)$-equivariant lifting $T$ such that $T \beta_{H}(\phi)=\phi$.
\end{itemize}\end{prob}

 Example \ref{ex:unconditional_basis} is a consequence of  Proposition \ref{SOTcompact} since the group of units on a space with a $1$-unconditional basis is SOT-compact.  However, we give a direct proof now, which may have generalizations and is inspired by the methods of \cite{godefroy2003lipschitz}.

\begin{prop} Let $X$ be an infinite dimensional Banach space with $1$-unconditional basis, and let $U$ be the group of units $\{-1,1\}^\omega$ acting in the canonical way on $X$. Then $X$ has the $U$-equivariant lifting property. \end{prop}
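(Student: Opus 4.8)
The plan is to reduce the problem to a sequence of finite-dimensional equivariant liftings and then pass to a limit, following the finite-dimensional approximation scheme underlying Godefroy and Kalton's proof of the separable case. Write $(e_n)$ for the $1$-unconditional basis, $(e_n^*)$ for the coordinate functionals, and for $g=(\epsilon_k)\in U$ recall that $ge_n=\epsilon_n e_n$. The starting observation is that the molecules $m_n:=\tfrac12(\delta(e_n)-\delta(-e_n))$ are genuine eigenvectors of the action: a direct check gives $\tilde{g}\,m_n=\epsilon_n m_n$ for every $g\in U$, while $\beta_X m_n=e_n$. Thus $x\mapsto\sum_n e_n^*(x)m_n$ would already be a $U$-equivariant lifting if the series converged in $\F(X)$; this happens for $\ell_1$-type bases but fails in general, since $\|m_n\|=\|e_n\|$, so a genuine limiting procedure is unavoidable.

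First I would produce equivariant liftings on the finite-dimensional blocks. Let $E_N=[e_1,\dots,e_N]$, let $P_N\colon X\to E_N$ be the canonical basis projection (of norm $1$ by $1$-unconditionality), and let $U_N=\{-1,1\}^N$ be the finite group acting on $E_N$, so that the $U$-action on $E_N$ factors through $U_N$. Since $E_N$ is separable it has the isometric lifting property by the Godefroy--Kalton theorem; fixing a norm-$1$ lifting and averaging over the finite group $U_N$ as in the Observation for finite groups yields a norm-$1$, $U_N$-equivariant lifting $T_N\colon E_N\to\F(E_N)$. Composing with the isometric inclusion $\iota_N\colon\F(E_N)\hookrightarrow\F(X)$ and with $P_N$, set $S_N:=\iota_N T_N P_N\colon X\to\F(X)$. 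One checks routinely that $\|S_N\|\le1$, that $\beta_X S_N=P_N\to\id_X$ in the strong operator topology, and --- combining the equivariance of $P_N$, of $T_N$, and of $\iota_N$ --- that each $S_N$ is $U$-equivariant, i.e. $S_N g=\tilde{g}\,S_N$ for all $g\in U$.

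It remains to extract from $(S_N)$ a single $U$-equivariant lifting $T\colon X\to\F(X)$. Passing to a weak$^*$ cluster point of $(S_N x)$ in the bidual (along a fixed ultrafilter, or via the amenable averaging in Figiel's argument) produces a linear, norm-$1$, $U$-equivariant map $T\colon X\to\F(X)^{**}$ with $\beta_X^{**}T=\id_X$, where linearity, the norm bound, the identity $\beta_X^{**}T=\id_X$, and equivariance all pass through the limit using the weak$^*$-continuity of $\beta_X^{**}$ and of each $\tilde{g}^{**}$. The main obstacle --- and the only nontrivial point --- is to ensure that $T$ takes values in $\F(X)$ itself rather than merely in its bidual, which is exactly the difficulty that forces the bidual-complementation hypothesis in the general theorem. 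Here I would exploit the extra rigidity coming from the sign group: since $\F(X)=\overline{\bigcup_N\F(E_N)}$ and any $U$-equivariant lifting must take values in the degree-one isotypic part $\bigoplus_n\F(X)_{\chi_{\{n\}}}$ of the $U$-action (because $[e_n]$ carries the character $\chi_{\{n\}}$), I would either build the $T_N$ consistently enough that $(S_N x)$ is norm-Cauchy in $\F(X)$ --- making the limit an honest element of $\F(X)$ --- or project the bidual-valued limit back by the $U$-equivariant isotypic projection $\phi\mapsto\int_U\epsilon_n\,\tilde{g}\phi\,d\mu(g)$ and verify that on these components the projection does not leave $\F(X)$. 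Once $T$ is known to land in $\F(X)$, checking that it is the desired norm-$1$ $U$-equivariant lifting is immediate, so this descent from the bidual is where the real work lies.
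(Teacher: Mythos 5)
Your finite-dimensional reduction is sound as far as it goes: the maps $S_N=\iota_N T_N P_N$ are indeed linear, of norm one, $U$-equivariant, and satisfy $\beta_X S_N=P_N$. The genuine gap is exactly where you locate it yourself: the weak$^*$ cluster point of $(S_Nx)$ lives in $\F(X)^{**}$, and neither of your two proposed ways of descending to $\F(X)$ is carried out or can be taken for granted. Option (a), choosing the $T_N$ ``consistently enough'' that $(S_Nx)$ is norm-Cauchy, is precisely the hard analytic content of the problem; nothing in the finite-group averaging forces any compatibility between $T_N$ and $T_{N+1}$, and the paper achieves the needed coherence only by writing down a single global formula --- a Bochner integral of elementary molecules $\delta\bigl(\tfrac12 e_n+S_n(t)\bigr)-\delta\bigl(-\tfrac12 e_n+S_n(t)\bigr)$ over a compact infinite-dimensional cube, with the norm estimate obtained through Gateaux-differentiable Lipschitz functions and equivariance checked by a sign-change of variables in the product measure --- rather than by gluing finite-dimensional liftings. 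Option (b), projecting the bidual-valued limit onto isotypic components, does not address the issue at all: the averaging $\int_U\chi_n(g)\,\tilde g^{**}\phi\,d\mu(g)$ of an element $\phi\in\F(X)^{**}$ is again only an element of $\F(X)^{**}$, and knowing that $Te_n$ must lie in the $\chi_n$-isotypic component gives no reason for it to lie in $\F(X)$. This descent problem is not a technicality: it is the same obstruction that forces the $\tilde G$-ultrasummand hypothesis in Proposition \ref{prop:projection implies lifting}, and it is not even known whether $\F(\ell_2)$ is complemented in its bidual.

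It is also worth pointing out that a complete proof was available by a much shorter route within the framework you were implicitly using. Since $X$ has a countable basis it is separable, hence has the isometric lifting property by Godefroy--Kalton; and $U=\{-1,1\}^\omega$ is an SOT-compact group of isometries of $X$. Proposition \ref{SOTcompact} therefore applies verbatim: Haar-averaging a single global lifting $T_0$ over $U$ produces a norm-one $U$-equivariant lifting directly in $\F(X)$, with no finite-dimensional truncation and no passage to the bidual. Your eigenvector observation about the molecules $m_n$ is correct and is a good sanity check, but the limiting procedure you sketch after it is the part of the argument that is actually missing.
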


\begin{proof}

 The first part of the proof is an adaptation of \cite[Theorem 3.1]{godefroy2003lipschitz}. Let $\{e_n\}$ be an unconditional basis for $X$, and let $\alpha_n>0$ be such that $\{\sum_{n=1}^\infty t_n\alpha_ne_n, t_n\in [-1/2,1/2] \}$ is compact. Consider on  $H=[-1/2,1/2]^\mathbb{N}$ the product topology and measure $\lambda$ which is the product of Lebesgue measures in each factor, and define, for each $t\in H$, $L(t)= \sum_{n=1}^\infty t_n\alpha_n e_n$. Let $\mathbb{N}_n=\mathbb{N}\setminus\{n\}$, $H_n=[-1/2,1/2]^{\mathbb{N}_n}$ and again $\lambda_n$ be the product of Lebesgue measures in each factor.  Denote $S_n(t) = \sum_{k\neq n} t_k\alpha_k e_k$, $t\in H_n$,  and define
 $$
 \phi_n = \int_{H_n} \delta\left(\frac12e_n +S_n(t)\right) - \delta\left(-\frac12e_n+S_n(t)\right)\,d\lambda_n(t).
 $$
 This Bochner integral is well defined, since $S_n$ has compact range. 
 Let $E$ be the linear span of $\{e_n\}$, and let $R:E\to \F(X)$ be the linear operator defined by $R(e_n)=\phi_n$. Note that $\beta R = Id_E$ since for each $n$,
 
 $$
 \beta (\phi_n) = \int_{H_n} \beta\left[\delta\left(\frac12e_n +S_n(t)\right) - \delta\left(-\frac12e_n+S_n(t)\right)\right]\,d\lambda_n(t)=\int_{H_n} \frac12e_n  +  \frac12e_n\,d\lambda_n(t) = e_n. 
 $$
 Let $f$ be a Lipschitz Gate\^aux differentiable function on $X$. We have that  
$$ 
f\left(\frac12 e_n +S_n(t)\right) - f\left(-\frac12 e_n +S_n(t)\right)=\int_{-\frac12}^\frac12 \langle \nabla f(se_n +S_n(t),e_n\rangle\,ds,$$ 
thus applying Fubini's theorem we get that
\begin{align*}
\langle f,\phi_n\rangle & = \int_{H_n}\int_{-\frac12}^\frac12 \langle \nabla f(se_n +S_n(t),e_n\rangle\,ds\,d\lambda_n(t)\\
& =\int_H \langle \nabla f(L(t),e_n\rangle d\lambda(t). 
\end{align*}
Now since for each $y,x\in X$ we have $|\langle \nabla f(y),x \rangle|\leq \|f\|_{\mathrm{Lip}} \|x\|$, it follows that for each $x\in E$
$$
|\langle f,R(x)\rangle|  \leq \int_H |\langle \nabla f(L(t)), x\rangle| d\lambda(t)
\leq \|f\|_{\mathrm{Lip}} \|x\|.
$$
By \cite[Corollary 6.43]{benyamini1998geometric}, the Gate\^aux differentiable functions in $B_{\mathrm{Lip}_0(X)}$ are uniformly dense, thus $w^*$-dense. Then for each $x\in E$ we have that $\|R(x)\|\leq \|x\|$, and it follows that we can extend $R$ to a norm one operator $T:X\to\F(X)$ such that  $\beta T = Id_X$. 

It remains to show that $T$ is $U$-equivariant. Let $g\in U$, and consider the map $\psi_g: \N\to \{-1,1\}$ given by 
$$
\psi_g(n) = \begin{cases} 1, & ge_n = e_n\\
-1, & g e_n = -e_n.
\end{cases}
$$
Consider also, for each $n\in\N$, $\Psi_g^n: H_n\to H_n$ defined by $\Psi_g^n((t_j)_{j\neq n}) = (\psi_g(j)t_j)_{j\neq n}$. 
For each $n\in \N$, 
\begin{align*}
\tilde g  Te_n & 
 = \int_{H_n} \delta\left(g\left(\frac12e_n +S_n(t)\right)\right) - \delta\left(g\left(-\frac12e_n+S_n(t)\right)\right)\,d\lambda_n(t)\\
& = \int_{H_n} \delta\left(\frac12 \psi_g(n)e_n +S_n(\Psi_g^n (t))\right) - \delta\left(-\frac12 \psi_g(n)e_n+S_n(\Psi_g^n (t))\right)\,d\lambda_n(t)\\
 & = \int_{H_n} \delta\left(\frac12 \psi_g(n)e_n +S_n(t)\right) - \delta\left(-\frac12 \psi_g(n)e_n+S_n( t)\right)\,d\lambda_n(t)
 \\
 & = \int_{H_n} \psi_g(n)\left[\delta\left(\frac12 e_n + S_n(t)\right) - \delta\left(-\frac12 e_n+S_n( t)\right)\right]\,d\lambda_n(t)\\
 &= \psi_g(n) Te_n = Tge_n. 
\end{align*}
Then by linearity, $\tilde g Tx=Tgx$ for all $x\in E$. Since $E$ is dense in $X$, it follows that $\tilde g T = Tg$.

 \end{proof} 

\begin{question} Can the above proof be adapted to the more general case of a K\"othe space, with its group of units?  Or to the case of a space with a symmetric basis and the group $G$ acting by signs and permutations of the vectors of the basis?
\end{question}

\section{Beyond SOT-compact groups}

We turn to results related to the amenability or topological amenability of $G$.
If $G$ is a bounded group of automorphisms on a space $Y$, then $Y$ is said to be complemented in its bidual by a $G$-equivariant projection $\pi$ if $g\pi=\pi g^{**}$ for any $g \in G$. This notion appears in \cite{CF} where such a space $Y$ is also called a \emph{$G$-ultrasummand}.
Examples are: reflexive spaces with any bounded group $G$ of isomorphisms; more generally dual spaces $Y=X^*$ with groups of isomorphisms on $X^*$ which are adjoint to   isomorphisms on $X$, i.e. $\{ g^* , g \in G\} \subseteq \mathcal{L}(X^*)$; or $L_1(0,1)$  with its isometry group.

\subsection{Case of an amenable group}

In \cite{CF}  it is proved that an exact sequence of $G$-spaces, in which $G$ is amenable and the middle space is a $G$-ultrasummand, must $G$-split as soon as it splits as a sequence of Banach spaces (Theorem 10.7). Applying this result to the exact sequence and dual sequence relating $X$ and $\F(X)$, together with the fact that a dual space $Y^*$ is always a $G$-ultrasummand for the dual action on it of a group $G$ acting on $Y$, we obtain:

\begin{prop} Let $G$ be an amenable group and $X$ a $G$-space. Then:

\begin{itemize}
\item[(a)] $X$ has the dual $G$-equivariant lifting property.
    \item[(b)]
If $X$ has the lifting property and ${\mathcal F}(X)$ is a $\tilde{G}$-ultrasummand, then $X$ has the $G$-equivariant lifting property.
\end{itemize}
\end{prop}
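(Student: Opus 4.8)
The plan is to deduce both statements directly from \cite{CF} Theorem 10.7, which guarantees that an exact sequence of $G$-spaces $G$-splits whenever $G$ is amenable, the middle space is a $G$-ultrasummand, and the sequence already splits in the category of Banach spaces. In each part I would identify the relevant exact sequence of $G$-spaces and verify these two remaining hypotheses --- Banach splitting and the ultrasummand condition on the middle term --- while carefully keeping track of which representation of $G$ is acting on each space.

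For (a), I would apply the theorem to the dual sequence $0 \rightarrow X^* \rightarrow \Lip_0(X) \rightarrow \Lip_0(X)/X^* \rightarrow 0$, which, as recalled above, is an exact sequence of $G$-spaces, the action on the middle term $\Lip_0(X)=\F(X)^*$ being the dual action $g \mapsto (\widetilde{g^{-1}})^*$. By the Fact recalled above (using the norm-one projection of \cite{benyamini1998geometric}), this sequence always splits in the category of Banach spaces. The key point is that the middle space is the dual $\F(X)^*$, and a dual space is automatically a $\tilde{G}$-ultrasummand for its dual action; thus no hypothesis on $X$ beyond amenability of $G$ is needed. Theorem 10.7 then yields that the dual sequence $G$-splits, which is exactly the dual $G$-equivariant lifting property.

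For (b), I would apply the theorem to the primal sequence $0 \rightarrow {\rm Ker}\beta_X \rightarrow \F(X) \rightarrow X \rightarrow 0$, already known to be an exact sequence of $G$-spaces (the action on $\F(X)$ being $g\mapsto \tilde{g}$). Since $X$ has the lifting property, this sequence splits as a sequence of Banach spaces, and by hypothesis $\F(X)$ is a $\tilde{G}$-ultrasummand. Amenability of $G$ together with Theorem 10.7 then gives that the sequence $G$-splits, and by the Fact relating $G$-splitting to the $G$-equivariant lifting property, $X$ has the $G$-equivariant lifting property.

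Since the substantive work is imported from \cite{CF}, the only genuine point to be careful about is the bookkeeping of the actions: $G$ acts on the various spaces through the homomorphisms $g\mapsto g$, $g\mapsto\tilde{g}$ and their adjoints, so amenability passes to each of these representations, and in each case the $G$-ultrasummand condition is precisely the existence of a bidual projection commuting with the relevant action. I expect the main (minor) obstacle to be simply confirming that the dual action appearing on $\Lip_0(X)$ in (a) is indeed the one witnessing $\F(X)^*$ as a $\tilde{G}$-ultrasummand, so that Theorem 10.7 applies verbatim.
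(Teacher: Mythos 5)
Your proposal is correct and follows essentially the same route as the paper: the authors likewise deduce both parts from \cite{CF} Theorem 10.7, applied to the dual sequence for (a) (where the middle term $\Lip_0(X)=\F(X)^*$ is automatically a $\tilde{G}$-ultrasummand for the dual action and the sequence always splits by the norm-one projection of \cite{benyamini1998geometric}) and to the primal sequence for (b) (where the Banach splitting comes from the lifting property and the ultrasummand condition is assumed).
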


This applies to all abelian groups $G$, so in particular to  the group $U$ of units of a K\"othe space (including when $U$ is not compact). In particular:

\begin{corollary} Let $1 \leq p \leq +\infty$. Then
\begin{enumerate}
\item
$L_p$ has the dual $U$-equivariant lifting property, i.e. there exists a projection map $P$ from ${\rm Lip}_0(L_p)$ onto $L_p^*$
such that $P(f)=f/\|f\| P(|f|)$ for all $f \in X$.
\item If ${\mathcal F}(L_p)$ is $U$-complemented in its bidual, then $L_p$ has the $U$-equivariant lifting property.
\end{enumerate}
\end{corollary}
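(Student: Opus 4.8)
The whole statement is simply the specialization of the previous Proposition to the group $G=U$ of units of $L_p$, acting by multiplication by $\{-1,1\}$-valued functions, and the only thing to verify before applying it is amenability. Since any two sign multiplications commute, $U$ is abelian, hence amenable, and it acts on $L_p$ by isometries. Thus $L_p$ is a $U$-space, and items (1) and (2) will be obtained by transcribing parts (a) and (b) of the Proposition in this concrete setting.

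For item (1) I would apply part (a): amenability of $U$ yields at once a bounded $U$-equivariant projection $P\colon\Lip_0(L_p)\to L_p^*$, and, since part (a) requires no lifting property, this holds for every $1\le p\le\infty$. What remains is to put the equivariance into the displayed pointwise shape. I would first record that the multiplication operator $M_\sigma$ by a sign $\sigma$ is self-dual for the duality $\langle L_p,L_{p'}\rangle$, so that the dual action of $M_\sigma$ on $\Lip_0(L_p)$ sends $h$ to $h\circ M_\sigma$ and restricts on $L_p^*=L_{p'}$ to multiplication by $\sigma$; the $U$-equivariance of $P$ then reads $P(h\circ M_\sigma)=M_\sigma\,P(h)$ for every $h\in\Lip_0(L_p)$ and every sign $\sigma$. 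Choosing $\sigma=\operatorname{sgn}(f)=f/|f|$ is exactly the substitution that turns this relation into the stated identity $P(f)=\tfrac{f}{\|f\|}P(|f|)$, with $f/\|f\|$ understood as the pointwise sign of $f$.

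For item (2) I would invoke part (b), whose two hypotheses are that $L_p$ has the lifting property and that $\F(L_p)$ is a $\tilde{U}$-ultrasummand. The latter is exactly the stated assumption that $\F(L_p)$ be $U$-complemented in its bidual. For the former I would use that $L_p$ is separable when $1\le p<\infty$, so that the Godefroy--Kalton theorem quoted above grants it the isometric lifting property; part (b) then delivers a $U$-equivariant linear lifting, i.e.\ the $U$-equivariant lifting property. (For $p=\infty$ separability fails and the quoted theorem no longer supplies the lifting property, so there one must assume it separately.)

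Because all the genuine work — the amenability argument packaged in the Proposition — is carried out upstream, I expect no serious obstacle here: the Corollary is essentially a translation of vocabulary. The only points demanding care are the bookkeeping in item (1), namely checking the self-duality of the sign multiplications so as to reach the exact form of the formula, and remembering that item (2) tacitly invokes separability (hence $1\le p<\infty$) to meet the lifting-property hypothesis of part (b).
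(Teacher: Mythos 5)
Your proposal matches the paper's (implicit) proof exactly: the corollary is presented as an immediate specialization of the preceding proposition to the group $U$ of units, which is abelian and hence amenable, with part (a) yielding item (1) and part (b) yielding item (2). Your added caveat that item (2) tacitly requires the lifting property of $L_p$ (supplied by Godefroy--Kalton's theorem via separability only for $1\le p<\infty$) is a fair observation about a point the paper glosses over, as is your reading of the somewhat loosely written equivariance formula in item (1).
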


\subsection{Increasing unions of compact groups}

We now consider the case when $G$ is the closure of an increasing union of compact sets $G_n$. An example of this are the unitary group ${\mathcal U}(\mathcal H)$, which may be written as the closure of the $n$-th dimensional unitary groups ${\mathcal U}(\ell_2^n)$ ; actually the unitary group has the stronger property of being a Levy group, see \cite{pestov} Chapter 4. For similar reasons, for $1 \leq p <+\infty$, the groups ${\rm Isom}(L_p[0,1])$ and
 ${\rm Isom}(\ell_p)$ are closure of increasing sequences of compact groups.

\begin{prop}\label{prop:dual lifting} Assume $G$ is a bounded group of automorphisms on $X$ that can be written as the SOT closure of an increasing sequence of SOT compact groups $G_n$. Assume furthermore that
the induced $\widetilde{G}^*$ on $Lip_0(X)$ is the SOT closure of the groups $\widetilde{G_n}^*$. 
If $X$ has the lifting property
then  $X$ has $G$-equivariant dual lifting property. \end{prop}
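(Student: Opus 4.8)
The plan is to construct the desired $G$-equivariant projection $P\colon \Lip_0(X)\to X^*$ as a $w^*$ limit of $G_n$-equivariant projections, exploiting that $\Lip_0(X)=\F(X)^*$ is a dual space and therefore enjoys $w^*$-compactness. This is precisely why one should expect only the \emph{dual} lifting property here and not the primal one: direct averaging over the noncompact $G$ fails, but the missing compactness is recovered on the dual side.

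First I would fix a lifting $T_0\in\TT$, which exists since $X$ has the lifting property. For each $n$, the group $G_n$ is SOT compact and bounded by $C:=\sup_{g\in G}\|g\|<\infty$, so Proposition \ref{SOTcompact} yields a $G_n$-equivariant lifting $T_n\in\TT$ with $\|T_n\|\le C^2\|T_0\|$, a bound \emph{uniform in $n$}. Dualizing, set $P_n:=\beta_X^*T_n^*$. Since $T_n^*\beta_X^*=(\beta_XT_n)^*=Id_{X^*}$ and $\beta_X^*\colon X^*\to\Lip_0(X)$ is the isometric inclusion, each $P_n$ is a bounded projection of $\Lip_0(X)$ onto $X^*$ with $\|P_n\|\le C^2\|T_0\|$; and as both $T_n$ and $\beta_X$ are $G_n$-equivariant, the observation that the adjoint of a $G$-equivariant map is equivariant for the dual action shows that $P_n$ commutes with the dual action $\rho(g)=(\widetilde{g^{-1}})^*$ for every $g\in G_n$.

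Next I would pass to the limit. Fix a free ultrafilter $\mathcal U$ on $\N$. For each $f\in\Lip_0(X)$ the family $\{P_nf\}$ lies in a bounded, hence $w^*$-compact, subset of the $w^*$-closed subspace $X^*\subseteq\Lip_0(X)$ (the two $w^*$ topologies agreeing on $X^*$), so one may define $Pf$ to be the $w^*$ limit $\lim_{\mathcal U}P_nf$. Then $P$ is linear and bounded with range in $X^*$, and $P|_{X^*}=Id$ because $P_n|_{X^*}=Id$ for all $n$; hence $P$ is a bounded projection onto $X^*$. For equivariance over $\bigcup_nG_n$: if $g\in G_m$ then $g\in G_n$ for all $n\ge m$ (this is where the increasingness of $(G_n)$ enters), so $P_n$ commutes with $\rho(g)$ along a tail, and since $\rho(g)$ is an adjoint operator it is $w^*$-continuous, which lets me pull it through the ultrafilter limit to obtain $P\rho(g)=\rho(g)P$.

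Finally I would upgrade the equivariance from $\bigcup_nG_n$ to all of $G$; here the second hypothesis is indispensable. Given $g\in G$, its dual action operator $\rho(g)$ is, by assumption, an SOT limit on $\Lip_0(X)$ of operators $h_\alpha\in\bigcup_n\widetilde{G_n}^*$, each of which commutes with $P$ by the previous step. As $P$ is norm-bounded, applying it to the norm-convergences $h_\alpha f\to\rho(g)f$ and $h_\alpha(Pf)\to\rho(g)(Pf)$ gives $P\rho(g)f=\lim_\alpha h_\alpha Pf=\rho(g)Pf$, so $P$ is $G$-equivariant and $X$ has the dual $G$-equivariant lifting property. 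I expect the main obstacle to be the limiting step itself: the topology must be chosen so that a limit of the $P_n$ exists, remains a projection onto $X^*$, and retains equivariance. The $w^*$-compactness of $\Lip_0(X)$ handles existence and the projection property, the $w^*$-continuity of the (adjoint) dual action secures equivariance along $\bigcup_nG_n$, and the SOT-closure hypothesis is exactly what bridges the gap from $\bigcup_nG_n$ to $G$.
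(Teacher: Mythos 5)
Your proposal is correct and follows essentially the same route as the paper: obtain $G_n$-equivariant liftings $T_n$ from Proposition \ref{SOTcompact}, dualize to projections of $\Lip_0(X)$ onto $X^*$, take a $w^*$ ultrafilter limit to get a projection equivariant over $\bigcup_n G_n$, and then use the SOT-density hypothesis on $\widetilde{G}^*$ (together with the uniform bound coming from $\|T_n\|\le C^2\|T_0\|$) to upgrade to full $G$-equivariance. If anything, you are slightly more explicit than the paper in writing $P_n=\beta_X^*T_n^*$ and in justifying the passage of the dual action through the ultrafilter limit.
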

 
\begin{proof} We know there is 
a $G_n$-equivariant projection $P_n=T_n^*$ from
${\rm Lip}_0(X)$ onto $X^*$, associated to a $G_n$-equivariant lifting $T_n$.
We let $Pf=\lim_U^{w*} P_n f$, i.e.
$Pf(x)=\lim_U (P_nf)(x)$. This is again a bounded projection onto $X^*$, and it is $\cup_n G_n$ equivariant.
Finally if $g \in G$ then
$$P(\tilde{g}^* f)(x)=\lim_U (P_n \tilde{g}^* f) (x)= \lim_U  f(\tilde{g} T_n x).$$
By hypothesis $f  \tilde{g}$ may be approximated in the $Lip_0(X)$-norm by some $f  \tilde{g_k}$, in particular, since  $T_n x$ is uniformly bounded in $n$:
$$P(\tilde{g}^* f)(x) \simeq \lim_U f(\tilde{g_k} T_n x)= \lim_U f(T_n g_{k} x) \simeq \lim_U f(T_n gx)=\lim_U (P_n f)(gx)=(Pf)(gx),$$
so we have $G$-equivariance of $P$.
\end{proof}

Although ${\rm Isom}(L_p[0,1])$ and ${\rm Isom}(\ell_p)$ are the closure of increasing sequences of compact groups, it is unclear whether these groups satisfy the second condition in the above proposition. Thus the following question is still unanswered: 

\begin{question}
    Let $X$ be $L_p[0,1]$ or $\ell_p$, $1\leq p <\infty$, and let $G={\rm Isom}(X)$. Does $X$ satisfy the $G$-equivariant dual lifting property?
\end{question}

\begin{prop}\label{prop:projection implies lifting}  Assume $G$ is a bounded group of automorphisms on $X$, which can be written as the SOT closure of an increasing sequence of SOT compact groups $G_n$. Assume that $\F(X)$ is a $\tilde{G}$-ultrasummand. If $X$ has the lifting property then
 $X$  has the $G$-equivariant lifting property. \end{prop}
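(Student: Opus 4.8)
The plan is to run the Haar-averaging of Proposition \ref{SOTcompact} on each compact piece $G_n$ and then pass to a limit, using the ultrasummand projection to descend from the bidual back to $\F(X)$. First, since $X$ has the lifting property I fix some $T_0\in\TT$ and, for each $n$, apply Proposition \ref{SOTcompact} to the SOT-compact group $G_n$ to obtain a $G_n$-equivariant lifting $T_n\in\TT$; by the \emph{furthermore} part of that proposition these are uniformly bounded, say $\|T_n\|\le C^2\|T_0\|$. Because $\F(X)^{**}=\Lip_0(X)^*$ is a dual space, for a fixed free ultrafilter $U$ on $\N$ and each $x\in X$ the bounded net $\{\iota_{\F}(T_nx)\}$ has a weak$^*$ cluster point, so $Sx:=w^*\text{-}\lim_U\iota_{\F}(T_nx)$ defines a bounded linear operator $S\colon X\to\F(X)^{**}$. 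Since $\beta_X^{**}\iota_{\F}=\iota_X\beta_X$ and $\beta_X T_n=\id_X$, one gets $\beta_X^{**}Sx=\iota_X(x)$ for every $x$, i.e. $S$ lifts $\beta_X^{**}$ into the bidual.

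I would next record that $S$ is $\bigcup_n G_n$-equivariant for the bidual action: if $g\in G_m$, then $T_ng=\tilde g T_n$ for all $n\ge m$, and as $\tilde g^{**}$ is weak$^*$ continuous with $\tilde g^{**}\iota_{\F}=\iota_{\F}\tilde g$, passing the limit yields $Sg=\tilde g^{**}S$. Let $\pi\colon\F(X)^{**}\to\F(X)$ be the $\tilde G$-equivariant projection supplied by the $\tilde G$-ultrasummand hypothesis, so $\tilde g\pi=\pi\tilde g^{**}$ for all $g\in G$, and set $T:=\pi S\colon X\to\F(X)$. Full $G$-equivariance of $T$ then follows by a continuity argument that deliberately avoids taking limits in the bidual: for $g\in G$ pick $g_k\to g$ in SOT with $g_k\in\bigcup_n G_n$; for each $k$ one has $T(g_kx)=\pi S(g_kx)=\pi\widetilde{g_k}^{**}Sx=\widetilde{g_k}\,\pi Sx=\widetilde{g_k}(Tx)$, using $Sg_k=\widetilde{g_k}^{**}S$ and the equivariance of $\pi$. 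Since $T$ is bounded we have $T(g_kx)\to T(gx)$ in norm, while $Tx\in\F(X)$ and $\widetilde{g_k}\to\tilde g$ in SOT (Lemma \ref{SOTSOT}) give $\widetilde{g_k}(Tx)\to\tilde g(Tx)$ in norm; hence $T(gx)=\tilde g(Tx)$, that is $Tg=\tilde g T$ on all of $G$. Thus equivariance is essentially routine once the ultrasummand projection is in hand.

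The step I expect to be the main obstacle is verifying that $T$ is a genuine lifting, i.e. $\beta_X T=\id_X$. Writing $Sx=\iota_{\F}(T_0x)+k(x)$ with $k(x)=w^*\text{-}\lim_U\iota_{\F}(T_nx-T_0x)$, the differences $T_nx-T_0x$ lie in $\ker\beta_X$, so $k(x)\in\overline{\iota_{\F}(\ker\beta_X)}^{\,w^*}=(\ker\beta_X)^{\perp\perp}=\ker\beta_X^{**}$, and one computes $\beta_X Tx=x+\beta_X\pi k(x)$. Everything therefore hinges on the compatibility $\pi\big(\ker\beta_X^{**}\big)\subseteq\ker\beta_X$ (equivalently $\ker\pi\subseteq\ker\beta_X^{**}$). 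This is \emph{not} automatic: for a general projection of a non-reflexive space onto itself inside its bidual one can have $\pi\big(Y^{\perp\perp}\big)\not\subseteq Y$ for a closed subspace $Y$, so $\beta_X\pi k(x)$ need not vanish. It is precisely here that the exact sequence and the ultrasummand structure must be made to interact.

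To overcome this I would not project with an arbitrary $\pi$, but arrange the descent to respect the subsequence, the key point being that $\ker\beta_X$ should itself be treated as a $\tilde G$-ultrasummand: since the sequence splits, $\ker\beta_X$ is complemented in $\F(X)$ and hence in its own bidual, and the averaging over the compact pieces $G_n$ can be carried out \emph{inside} $(\ker\beta_X)^{**}$ — where Bochner averages of $\ker\beta_X$-valued maps stay in $\ker\beta_X$ — and then pushed down by a $\tilde G$-equivariant projection of $(\ker\beta_X)^{**}$ onto $\ker\beta_X$. Performed this way the averaged object lands in $\ker\beta_X$ by construction, so the resulting $G$-equivariant projection $P$ of $\F(X)$ onto $\ker\beta_X$ exists and, by the equivalence (i)$\Leftrightarrow$(iii) of the Castillo–Ferenczi definition, produces the desired $G$-equivariant lifting. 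The delicate remaining task, and the true heart of the proof, is to produce the required $\tilde G$-equivariant projection on $(\ker\beta_X)^{**}$ from the given projection $\pi$ on $\F(X)^{**}$ together with the compact approximating groups $G_n$, without re-introducing a non-equivariant choice in the final descent.
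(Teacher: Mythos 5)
Your first two paragraphs reproduce the paper's argument almost verbatim: the paper also takes the $G_n$-equivariant liftings $T_n$ from Proposition \ref{SOTcompact}, forms $Tx=w^*\text{-}\lim_U T_nx$ in $\Lip_0(X)^*=\F(X)^{**}$, composes with the $\tilde G$-equivariant projection $\pi$, and proves full $G$-equivariance of $\pi T$ exactly as you do, via $\bigcup_n G_n$-equivariance together with SOT-density and Lemma \ref{SOTSOT}. The problem is that your text stops before it proves the proposition. The identity $\beta\pi T=\id_X$ is never established: your third paragraph only explains why you doubt the direct verification, and your fourth paragraph replaces the missing step by another missing object, namely a $\tilde G$-equivariant projection of $(\ker\beta_X)^{**}$ onto $\ker\beta_X$, which you explicitly decline to construct. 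That object is not a technical afterthought: by the equivalence (i)$\Leftrightarrow$(iii) in the Castillo--Ferenczi definition, already a $\tilde G$-equivariant projection of $\F(X)$ onto $\ker\beta_X$ \emph{is} the $G$-equivariant lifting you are trying to build, so the ``delicate remaining task'' you defer carries essentially the whole weight of the theorem. As written, the proposal is a correct reconstruction of the paper's setup followed by an unfinished plan, and therefore does not constitute a proof.

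For comparison, the paper closes the lifting verification with the one-line duality computation $\langle\beta\pi Tx,\phi\rangle=\langle Tx,\pi^*\beta^*\phi\rangle=\lim_U\langle T_nx,\pi^*\beta^*\phi\rangle=\lim_U\langle \pi T_nx,\beta^*\phi\rangle=\lim_U\langle\beta T_nx,\phi\rangle=\langle x,\phi\rangle$, the key point being that $\pi$ fixes each $T_nx$ because $T_nx$ already lies in $\F(X)$. Your objection targets exactly the second equality, which commutes the ultrafilter limit past the functional $\pi^*\beta^*\phi$; that functional lives in $\Lip_0(X)^{**}$ and need not be $w^*$-continuous on $\Lip_0(X)^*$, and your reformulation of the issue as ``$\pi$ must map $(\ker\beta_X)^{\perp\perp}$ into $\ker\beta_X$'' is the right way to isolate what that equality is really asserting. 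So you have correctly located the one step of the argument that is not purely formal. But flagging a difficulty does not discharge it: to turn your proposal into a proof you must either justify that step --- i.e.\ show that the error term $\langle \pi Tx,\beta^*\phi\rangle-\langle Tx,\beta^*\phi\rangle$ vanishes --- or actually produce the $\tilde G$-equivariant projection onto $\ker\beta_X$ that your alternative route requires. Neither is done, so the proposal has a genuine gap precisely where the paper does its only real work.
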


\begin{proof}  By Proposition \ref{SOTcompact} we know that there is a $G_n$-equivariant lifting $T_n: X \rightarrow \F(X)$ for each $n$.
 Let $T: X \rightarrow {\rm Lip}_0(X)^*$ be defined by
$Tx=\lim_U^{w^\ast} T_n x$ in ${\rm Lip}_0(X)^*$
(i.e. for any $f \in {\rm Lip}_0(X)$, $\langle Tx,f\rangle =\lim_U \langle T_n x,f\rangle $).

We claim that  $\pi T$, where $\pi:{\rm Lip}_0(X)^*\to \F(X)$ is a $G$-equivariant projection existing by assumption, is a linear lifting of $\beta$ (the $G$-equivariance of $\pi$ is not needed here). Indeed, for any $x \in X$, $\phi \in X^*$,
$$\langle \beta \pi Tx, \phi\rangle =\langle Tx, \pi^* \beta^* \phi\rangle $$
noting that $\beta^*$ is the natural inclusion of $X^*$ into ${\rm Lip}_0(X)$.
So,
$$\langle \beta \pi Tx, \phi\rangle =\lim_U \langle T_nx, \pi^*\beta^* \phi\rangle =\lim_U \langle T_nx, \beta^* \phi\rangle 
=\lim_U \langle \beta T_n x, \phi\rangle  =\langle x,\phi\rangle $$ proving the claim.

Note that if $g$ belongs to one of the $G_n$'s then
$$\langle Tgx,f\rangle =\lim_U \langle T_n gx, f\rangle =\lim_U \langle \tilde{g} T_n x, f\rangle 
=\lim_U \langle T_n x, f   g\rangle =\langle Tx, f   g\rangle =\langle \tilde{g}^{**} Tx, f\rangle. $$
Because $\pi$ is $\cup_n G_n$-equivariant, we see that
$\pi T$ is $\cup_n G_n$-equivariant.

To conclude, we prove that $\pi T$ is $G$-equivariant. 
So let $g \in G$ and fix $x \in X$. We have that
$$\pi Tgx  \simeq  \pi Tg_n x=\tilde{g_n} \pi Tx$$ for $n$ large enough. Now we use that $g \mapsto \tilde{g}$ is SOT-SOT continuous from
$G$ to ${\mathcal L}(\F(X))$, by Lemma \ref{SOTSOT}, to obtain that $\tilde{g_n} \pi Tx \simeq \tilde{g} \pi Tx$ for $n$ big enough. Hence,
$$\pi Tgx=\tilde{g} \pi Tx,$$
which concludes the proof.
\end{proof}

\begin{corollary} If $\F(\mathcal H)$ is a ${\mathcal U}(\mathcal H)$-ultrasummand, then the Hilbert space $\mathcal H$ has the 
${\mathcal U}(\mathcal H)$-equivariant lifting property, i.e. there exists a ${\mathcal U}(\mathcal H)$-equivariant right inverse of $\beta: \F(\mathcal H) \rightarrow {\mathcal H}$.
\end{corollary}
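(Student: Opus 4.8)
The plan is to verify that the hypotheses of Proposition \ref{prop:projection implies lifting} are satisfied for $X=\mathcal H$ and $G={\mathcal U}(\mathcal H)$, and then to invoke that proposition directly. Three of the four required conditions are immediate: the unitary group ${\mathcal U}(\mathcal H)$ is a bounded (indeed isometric) group of automorphisms of $\mathcal H$; the assumption that $\F(\mathcal H)$ is a $\widetilde{{\mathcal U}(\mathcal H)}$-ultrasummand is exactly the hypothesis of the corollary; and, $\mathcal H$ being separable, it has the lifting property by the Godefroy--Kalton theorem. The only point needing genuine verification is that ${\mathcal U}(\mathcal H)$ can be written as the SOT closure of an increasing sequence of SOT-compact groups.

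To produce such a sequence, I would fix an orthonormal basis $(e_n)_{n\geq 1}$ of $\mathcal H$, set $\mathcal H_n=\lspan(e_1,\dots,e_n)$, and let $G_n$ be the group of unitaries that act as an arbitrary unitary on $\mathcal H_n$ and as the identity on $\mathcal H_n^{\perp}$. These groups form an increasing chain, and each $G_n$ is topologically isomorphic to the finite-dimensional unitary group ${\mathcal U}(\ell_2^n)$. Since on the finite-dimensional space $\mathcal H_n$ the strong operator topology coincides with the norm topology, and since all elements of $G_n$ agree (with the identity) on the complementary factor, the SOT restricted to $G_n$ agrees with the usual compact topology of ${\mathcal U}(\ell_2^n)$; hence each $G_n$ is SOT-compact. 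It then remains to check that $\bigcup_n G_n$ is SOT-dense in ${\mathcal U}(\mathcal H)$: given $u\in{\mathcal U}(\mathcal H)$, finitely many vectors, and $\e>0$, one approximates $u$ in the SOT by a unitary supported on a sufficiently large $\mathcal H_n$. This is the standard description of ${\mathcal U}(\mathcal H)$ as the SOT closure of its finite-rank unitary subgroups (see \cite{pestov} Chapter 4, where the stronger Lévy property is recorded). With this, $G={\mathcal U}(\mathcal H)$ is the SOT closure of the increasing sequence $(G_n)$, all hypotheses of Proposition \ref{prop:projection implies lifting} hold, and the conclusion follows at once.

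Because the statement is framed as a corollary, there is no substantial obstacle remaining once Proposition \ref{prop:projection implies lifting} is in hand: the real work (building liftings $T_n$ on each $G_n$ via the averaging of Proposition \ref{SOTcompact}, passing to a weak${}^*$ ultralimit, and correcting via the equivariant bidual projection) has already been carried out there. The one delicate thread is the chain of approximation facts about the unitary group, together with the implicit use of separability of $\mathcal H$ --- needed both to apply Godefroy--Kalton and to realize $G$ as the closure of a countable \emph{sequence} of compact groups rather than a net. For nonseparable $\mathcal H$ the present argument would require replacing sequences by nets throughout Proposition \ref{prop:projection implies lifting}, so I would state the corollary for separable $\mathcal H$.
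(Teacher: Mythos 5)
Your proposal is correct and follows exactly the route the paper intends: the corollary is an immediate application of Proposition \ref{prop:projection implies lifting}, using the fact (already recorded in the paper's preamble to that subsection, with reference to \cite{pestov}) that ${\mathcal U}(\mathcal H)$ is the SOT closure of the increasing sequence of finite-dimensional unitary groups ${\mathcal U}(\ell_2^n)$, together with Godefroy--Kalton for the lifting property of $\mathcal H$. Your remark that separability of $\mathcal H$ is implicitly needed (both for the lifting property and for the countable exhaustion by compact subgroups) is a fair and accurate reading of the paper's tacit assumptions.
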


\begin{obs}
   We do not know much about Banach spaces $X$ for which $\F(X)$ is complemented in $\F(X)^*{}^*$. By \cite{cuth2019finitely}, for each $d$-dimensional Banach space $E$, $\F(E)$ is $d_{BM}(E,\ell_2^d)$-complemented in its bidual. In particular, $\F(\ell_2^d)$ is 1-complemented in its bidual. It is not known (according to that paper) whether $\F(\ell_2)$ is complemented in its bidual. As a consequence of Proposition \ref{prop:projection implies lifting}, if $X$ has the lifting property but we prove that there is no $G$-equivariant lifting from $X$ to $\F(X)$, then in particular there will be no $G$-equivariant projection from $\F(X)$** onto $\F(X)$.
\end{obs}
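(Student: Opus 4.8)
The plan is to isolate the only part of the observation that carries a proof obligation --- its final sentence --- and to recognize that this sentence is precisely the logical contrapositive of Proposition~\ref{prop:projection implies lifting}. The preceding sentences are citations and contextual remarks (the complementation estimates for $\F(E)$ from \cite{cuth2019finitely} and the open status of $\F(\ell_2)$) and require nothing beyond those references. So I would begin by writing out the implication proved in Proposition~\ref{prop:projection implies lifting} in bare logical form: under the two standing hypotheses that $G$ is the SOT closure of an increasing sequence of SOT compact groups and that $X$ has the lifting property, one has
\[
\bigl(\F(X)\text{ is a }\tilde{G}\text{-ultrasummand}\bigr)\ \Longrightarrow\ \bigl(X\text{ has the }G\text{-equivariant lifting property}\bigr).
\]

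Next I would take the contrapositive of this implication while holding the two framing hypotheses fixed (they are assumptions, not conclusions, so they persist):
\[
\bigl(X\text{ fails the }G\text{-equivariant lifting property}\bigr)\ \Longrightarrow\ \bigl(\F(X)\text{ is not a }\tilde{G}\text{-ultrasummand}\bigr).
\]
To match the wording of the observation, I would then unpack the two predicates. Failing the $G$-equivariant lifting property means, by definition, that there is no $G$-equivariant bounded linear right inverse $T\colon X\to\F(X)$ of $\beta_X$; combined with the standing assumption that $X$ has the (plain) lifting property, this is exactly the hypothesis ``$X$ has the lifting property but there is no $G$-equivariant lifting''. On the other side, a $\tilde{G}$-ultrasummand is, by the definition recalled at the opening of this section, a space complemented in its bidual by an equivariant projection; hence ``$\F(X)$ is not a $\tilde{G}$-ultrasummand'' says precisely that there is no projection $\pi\colon\F(X)^{**}\to\F(X)$ with $\tilde{g}\pi=\pi\tilde{g}^{**}$ for all $g\in G$, which is the asserted conclusion.

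The main point requiring care --- rather than a genuine mathematical obstacle --- is the bookkeeping of which hypotheses are negated. The structural condition on $G$ and the plain lifting property of $X$ must remain in force when the contrapositive is applied, so that the single implication being reversed is the one between ``$\tilde{G}$-ultrasummand'' and ``$G$-equivariant lifting property''. The phrase ``As a consequence of Proposition~\ref{prop:projection implies lifting}'' signals exactly that these framing hypotheses are assumed; once they are fixed, the observation is immediate and no further argument is needed.
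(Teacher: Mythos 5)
Your proposal is correct and matches the paper's intent exactly: the paper gives no separate proof because the final sentence is precisely the contrapositive of Proposition~\ref{prop:projection implies lifting} with the standing hypotheses (the structural condition on $G$ and the plain lifting property of $X$) held fixed, which is the argument you spell out. Your bookkeeping of which hypotheses persist under contraposition, and your unpacking of ``$\tilde{G}$-ultrasummand'' as the existence of a projection $\pi\colon\F(X)^{**}\to\F(X)$ with $\tilde{g}\pi=\pi\tilde{g}^{**}$, is exactly right, so nothing further is needed.
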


\subsection{Final questions}
\begin{question}
Suppose that $X$ has the $G$-equivariant lifting property, and that $\F(X)$ is an ultrasummand, i.e. is complemented in its bidual. Does it follow that $\F(X)$ is a $\tilde{G}$-ultrasummand, i.e. is complemented in its bidual by a $G$-equivariant projection? 
\end{question}

\begin{obs} 
The class of $G$-equivariant operators is a WOT-closed linear subspace of ${\mathcal L}(X,\F(X))$. The classes $\TT_C$ and $\TT_C(G)$ are convex  and closed in WOT.
\end{obs}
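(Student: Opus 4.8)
The plan is to verify the three assertions by direct computations in the weak operator topology, organized so that the statement about $\TT_C(G)$ reduces to the other two: since $\TT_C(G)$ is precisely the intersection of $\TT_C$ with the set of $G$-equivariant operators, it suffices to show that $\TT_C$ is convex and WOT-closed and that the $G$-equivariant operators form a WOT-closed linear subspace.

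First I would treat the $G$-equivariant operators. Linearity is immediate, since each $\tilde g$ is linear: if $S g=\tilde g S$ and $T g=\tilde g T$ for all $g\in G$, the same holds for any linear combination. For WOT-closedness, suppose $T_\alpha\to T$ in WOT with each $T_\alpha$ $G$-equivariant, and fix $g\in G$, $x\in X$ and a functional $\phi\in\F(X)^{*}=\Lip_0(X)$. The key is that WOT convergence may be tested against arbitrary functionals and at arbitrary points, which lets me compute
\[
\langle\phi,T(gx)\rangle=\lim_\alpha\langle\phi,T_\alpha(gx)\rangle=\lim_\alpha\langle\phi,\tilde g\,T_\alpha x\rangle=\lim_\alpha\langle\tilde g^{*}\phi,T_\alpha x\rangle=\langle\tilde g^{*}\phi,Tx\rangle=\langle\phi,\tilde g\,Tx\rangle,
\]
where the middle equality uses the equivariance of $T_\alpha$, and passing to the functional $\tilde g^{*}\phi$ is exactly what allows me to invoke WOT convergence at the fixed point $x$. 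As $\phi$ is arbitrary, $T(gx)=\tilde g\,Tx$, so $T$ is $G$-equivariant.

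Next I would handle $\TT_C$. Convexity is direct: for $S,T\in\TT_C$ and $t\in[0,1]$ one has $\beta_X\big(tT+(1-t)S\big)=t\,\mathrm{Id}+(1-t)\,\mathrm{Id}=\mathrm{Id}$ and $\|tT+(1-t)S\|\le C$. For WOT-closedness, let $T_\alpha\to T$ in WOT with $T_\alpha\in\TT_C$. To recover the right-inverse property I would pair against $\phi\in X^{*}$, noting that $\beta_X^{*}\phi\in\Lip_0(X)=\F(X)^{*}$, which gives $\langle\phi,\beta_X Tx\rangle=\langle\beta_X^{*}\phi,Tx\rangle=\lim_\alpha\langle\beta_X^{*}\phi,T_\alpha x\rangle=\lim_\alpha\langle\phi,\beta_X T_\alpha x\rangle=\langle\phi,x\rangle$, so $\beta_X T=\mathrm{Id}$. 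The norm bound $\|T\|\le C$ follows from lower semicontinuity of the operator norm under WOT: for each $x\in X$ and each $\phi$ with $\|\phi\|\le 1$ one has $|\langle\phi,Tx\rangle|=\lim_\alpha|\langle\phi,T_\alpha x\rangle|\le C\|x\|$, and taking the supremum over such $\phi$ yields $\|Tx\|\le C\|x\|$.

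Finally, since $\TT_C(G)=\TT_C\cap\{\,T:T\text{ is }G\text{-equivariant}\,\}$, it is convex as an intersection of convex sets and WOT-closed as an intersection of WOT-closed sets. I do not expect a genuine obstacle in any of this; the only point requiring a small argument is the norm estimate for WOT-limits, and it is precisely here that boundedness by $C$ is essential, since WOT controls only the pointwise-weak behaviour and gives merely lower semicontinuity of the norm. The remaining care is purely bookkeeping: one must consistently pair against functionals in the correct dual, namely $\Lip_0(X)=\F(X)^{*}$, using the inclusion $X^{*}\hookrightarrow\Lip_0(X)$ realised by $\beta_X^{*}$.
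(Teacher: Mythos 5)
Your proof is correct and follows essentially the same route as the paper: the key step for WOT-closedness of the equivariant operators is the same identity $\langle \tilde g Tx, f\rangle = \langle Tx, f\circ\tilde g\rangle$, which is exactly how the paper tests convergence (the paper only writes out this part explicitly). Your added verifications for $\TT_C$ --- pairing against $\beta_X^*\phi$ to preserve the right-inverse property and using weak lower semicontinuity for the norm bound --- are the standard arguments the paper leaves implicit, and they are carried out correctly.
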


\begin{proof} 
Let $T_\alpha$ be a net of $G$-equivariant operators in ${\mathcal L}(X,\F(X))$ converging in WOT to $T\in {\mathcal L}(X,\F(X))$. Let $x\in X$, $f\in \rm{Lip}_0(X)$ and $g\in G$. Then, since $ f \tilde{g} \in \rm{Lip}_0(X)$, 
$$
\langle \tilde{g}Tx, f\rangle=\langle Tx, f \tilde{g}\rangle =\lim \langle T_{\alpha}x, f \tilde{g}\rangle=\lim \langle \tilde{g}T_{\alpha}x, f\rangle=\lim \langle T_{\alpha}gx, f\rangle = \langle Tgx, f\rangle. 
$$
Since $f$ is arbitrary, it follows that $\tilde{g} Tx=Tgx$. Thus $\tilde{g}T=Tg$. \end{proof}

 We have also the following dual results.

\begin{obs} The space ${\mathcal P}_C$ of (resp. $G$-equivariant)\ projections from
${\rm Lip}_0(X)$ onto $X^*$ of norm at most $C$ is ${\rm W}^*{\rm OT}$-compact.
\end{obs}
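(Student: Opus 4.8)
The plan is to exhibit $\mathcal P_C$ (and its $G$-equivariant subclass) as a closed subset of a compact space built from the weak$^*$ topology on ${\rm Lip}_0(X)=\F(X)^*$, and then invoke Tychonoff's theorem. First I would record the relevant weak$^*$-closedness: the inclusion $X^*\subseteq{\rm Lip}_0(X)$ realizes $X^*$ as ${\rm Im}\,\beta_X^*=(\ker\beta_X)^\perp$, the range of the adjoint of the quotient map $\beta_X:\F(X)\to X$; since an annihilator is weak$^*$-closed, $X^*$ is weak$^*$-closed in ${\rm Lip}_0(X)$. Next, I would identify each operator $P$ of norm at most $C$ with the family $(Pf)_{f\in{\rm Lip}_0(X)}$ lying inside the product $\Pi=\prod_{f}C\|f\|\,B_{{\rm Lip}_0(X)}$. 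Each factor is weak$^*$-compact by Banach--Alaoglu (recall ${\rm Lip}_0(X)$ is a dual space), so $\Pi$ is compact by Tychonoff, and by construction the W$^*$OT on the norm-$\le C$ operators is exactly the topology induced by $\Pi$ under $P\mapsto(Pf)_f$.

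I would then check that every defining relation cuts out a closed subset of $\Pi$. Fixing $\phi\in\F(X)$, each coordinate evaluation $(h_f)_f\mapsto\langle h_f,\phi\rangle$ is continuous, so the following are all closed: linearity, i.e. $h_{af+bg}=ah_f+bh_g$ for all scalars $a,b$ and all $f,g$ (this picks out the genuinely linear families, hence the norm-$\le C$ operators); the range condition $Pf\in X^*$ for every $f$, which is closed precisely because $X^*$ is weak$^*$-closed; and $P|_{X^*}={\rm id}$, i.e. $P\psi=\psi$ for every $\psi\in X^*$. These last two conditions together force $P^2=P$ with range exactly $X^*$, so their common solution set in $\Pi$ is precisely $\mathcal P_C$; being closed in the compact $\Pi$, it is compact, and any limit is automatically bounded by $C$ (inherited from the factors $C\|f\|B_{{\rm Lip}_0(X)}$). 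For the $G$-equivariant version, the action of $g$ on ${\rm Lip}_0(X)=\F(X)^*$ is by a fixed weak$^*$-continuous operator $A_g$ (an adjoint of the corresponding $\tilde g^{\pm1}$ on $\F(X)$), and equivariance reads $PA_g=A_gP$; testing against $\phi\in\F(X)$ rewrites this as $\langle P(A_gf),\phi\rangle=\langle Pf,A_g^*\phi\rangle$ for all $g,f,\phi$, once more an equality of continuous functions of $(h_f)_f$, so it cuts out a further closed and hence compact subset.

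I expect the only genuine point to be the range condition: one must ensure that a W$^*$OT-limit of projections onto $X^*$ still has range inside $X^*$, which is exactly what the weak$^*$-closedness of $X^*=(\ker\beta_X)^\perp$ guarantees. Everything else reduces to routine continuity checks against the predual $\F(X)$, once one has confirmed that the product topology restricted to linear families coincides with the W$^*$OT, which is immediate from the definitions.
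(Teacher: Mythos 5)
Your proof is correct and follows essentially the same route as the paper: identify $\mathcal{P}_C$ with a subset of a compact product of weak$^*$-compact balls via $P\mapsto(Pf)_f$ and check that the defining conditions (linearity, range in $X^*$, identity on $X^*$, equivariance) are closed. The only difference is that the paper embeds into $\prod_{f\in B_{{\rm Lip}_0(X)}}\bigl(B_{X^*}(0,C),w^*\bigr)$, so the range condition you single out as the one genuine point is built into the codomain and the weak$^*$-closedness of $X^*=(\ker\beta_X)^{\perp}$ in ${\rm Lip}_0(X)$ is never needed; your version carries that extra (correct) lemma instead.
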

\begin{proof}

Indeed this is a subset of $(B_{X^*}(0,C), w*)^{B_{{\rm Lip}_0(X)}}$
via $P \mapsto (Pf)_f$
which is compact, and the notion of convergence induced on ${\mathcal P}$ is precisely the ${\rm W}^*{\rm OT}$-convergence. So we just need to check that ${\mathcal P}$ is closed.
So assume $P_\alpha$ converges to some $P$, linearity is clear, and
if $\phi \in X^* \subset {\rm Lip}_0(X)$, then $P(\phi)=\lim_\alpha P_\alpha(\phi)=\lim_\alpha \phi=\phi$, so $P \in {\mathcal P}$. The same holds when restricting to $G$-equivariant projections.
\end{proof}

It seems to remain open whether the above observations may be used together with separation theorems or general topological properties of $G$ to obtain existence of some $G$-equivariant lifting or projections.

\section*{Acknowledgements}
Parts of this research were carried out during the third author's visit to Universidade de S\~ao Paulo and Universidade Federal de S\~ao Paulo in 2022, for which she wishes to express her gratitude.

\bibliographystyle{siam}

\end{document}